\newtheorem{theorem}{Theorem}
\newtheorem{lemma}{Lemma}
\newtheorem{remark}[lemma]{Remark}
\newenvironment{proof}[1][Proof]{\noindent\textbf{#1.} }{\ \rule{0.5em}{0.5em}}
\newcommand{\pth}{}
\nc{\C}{\mathfrak{M}}
\begin{document}

\title{Farthest points on most Alexandrov surfaces}
\author{Jo\"{e}l Rouyer
\and Costin V\^{\i}lcu}
\maketitle

\abstract{We study global maxima of distance functions on most Alexandrov surfaces
with curvature bounded below, where {\sl most} is used in the sense of Baire categories.}

\medskip

{\small Math. Subj. Classification (2010): 53C45}

{\small Key words and phrases: Alexandrov surface, farthest point, Baire
category}


\section{Introduction}

In this article, by \textit{Alexandrov surface}, we always mean a compact
$2$-dimensional Alexandrov space with curvature bounded below by $\kappa$,
without boundary. Closed Riemannian surfaces with Gauss curvature at least
$\kappa$ and $\kappa$-polyhedra are important examples of such surfaces. Other
significant examples, for $\kappa=0$, are convex surfaces (\textit{i.e.},
boundaries of compact convex sets with non-empty interior) in $\mathbb{R}^{3}%
$. Roughly speaking, these surfaces are $2$-dimensional topological manifolds
endowed with an intrinsic metric which verifies Toponogov's comparison
property. For the precise definition and basic properties of Alexandrov
spaces, see for example \cite{BGP}.

Let $\mathcal{A}\left(  \kappa\right)  $ be the set of Alexandrov surfaces
with curvature bounded below by $\kappa$. Before going further, we recall the
following simple fact.

\begin{remark}
\label{k} Multiplying all distances in $A\in\mathcal{A}(\kappa)$ by a constant
$\delta>0$ provides an Alexandrov surface $\delta A\in\mathcal{A}(\frac
{\kappa}{\delta^{2}})$. Moreover, for $A^{\prime}\in\mathcal{A}(\kappa)$, the
Gromov-Hausdorff distance between $\delta A$ and $\delta A^{\prime}$ is
exactly $\delta$ times the Gromov-Hausdorff distance between $A$ and
$A^{\prime}$. Therefore, the spaces $\mathcal{A}(\kappa)$ and $\mathcal{A}%
(\frac{\kappa}{\delta^{2}})$ are homothetic, and we may assume without loss of
generality that $\kappa\in\{-1,0,1\}$.
\end{remark}

Two Alexandrov surfaces belong to the same connected component of
$\mathcal{A}(\kappa)$ if and only if they are homeomorphic to each other
\cite{RV2}. Denote by $\mathcal{A}\left(  \kappa,\chi\right)  $ the set of all
surfaces in $\mathcal{A}(\kappa)$ whose Euler-Poincar\'{e} characteristic is
$\chi$. Then $\mathcal{A}\left(  \kappa,\chi\right)  $ (if non-empty) is a
connected component of $\mathcal{A}\left(  \kappa\right)  $ if $\chi$ is
positive or odd, and is the union of two components otherwise. In particular,
$\mathcal{A}\left(  0\right)  $ has four components, consisting respectively
of flat tori, flat Klein bottles (both of these in $\mathcal{A}\left(
0,0\right)  $), convex surfaces (in $\mathcal{A}\left(  0,2\right)  $), and
non-negatively curved projective planes (in $\mathcal{A}\left(  0,1\right)
$). Sometimes it is necessary to exclude the components consisting of flat
surfaces; we shall then replace $\mathcal{A}\left(  \kappa\right)  $ by
\[
\mathcal{B}\left(  \kappa\right)  =\left\{
\begin{array}
[c]{l}%
\mathcal{A}\left(  \kappa\right) \\
\mathcal{A}\left(  0,1\right)  \cup\mathcal{A}\left(  0,2\right)
\end{array}
\right.
\begin{array}
[c]{l}%
\text{if }\kappa\neq0,\\
\text{if }k=0.
\end{array}
\]

It is known that, endowed with the topology induced by the Gromov-Hausdorff
distance, the set $\mathcal{A}(\kappa)$ is a Baire space \cite{IRV2}. In any
Baire space, one says that \textit{most elements} enjoy, or that a
\textit{typical element} enjoys, a given property if the set of those elements
which do not satisfy it is of first category.

The investigation of typical properties of Alexandrov surfaces from Baire
category viewpoint is very recent, see \cite{AZ_2}, \cite{IRV2}, \cite{RV3};
it generalizes a similar, well-established research direction for convex
surfaces, see \eg
the survey \cite{gw}.

\medskip

The study of farthest points on convex surfaces originated from some questions
of H. Steinhaus, presented by H. T. Croft, K. J. Falconer and R. K. Guy in the
chapter A35 of their book \cite{cfg}. The questions have been answered since
then, mainly by T. Zamfirescu in a series of papers \cite{z-q}, \cite{Z2},
\cite{z-fp}, \cite{z-ep}, and yielded several results about farthest points on
most convex surfaces; see the survey \cite{v2}. A few results have also been
obtained for general Alexandrov surfaces \cite{v3}, \cite{vz2}. The framework
of a typical Riemannian metric was considered in \cite{JR3}.

\medskip

In this paper we employ Baire categories to obtain properties of farthest
points on Alexandrov surfaces, thus contributing to the study proposed by H.
Steinhaus. We generalize results about convex surfaces from \cite{Z2} by showing
that, on most surfaces $A\in\mathcal{B}\left(  \kappa\right)  $, most points
$x\in A$ have a unique farthest point (Theorem \ref{TH2}) which is joined to
$x$ by precisely $3$ segments (Theorem \ref{segments}). In particular, Theorem
\ref{TH2} gives an answer to the last open problem in \cite{z-ep}.

The restriction to $\mathcal{B}(\kappa)$ concerning Theorem \ref{TH2} is
mandatory. Indeed, all points on a typical flat torus have two farthest
points, while in the 
connected component $\mathcal{K} \subset \mathcal{A}\left(  0\right)$
of flat Klein bottles there is no typical behavior.
More precisely, there exists open sets $\mathcal{U}$, $\mathcal{V}$ in $\mathcal{K}$ such that, on any
surface in $\mathcal{U}$ most (but not all) points have two antipodes, while
on any surface in $\mathcal{V}$ there exits open sets of points $U_{1}$ and
$U_{2}$ such that any point of $U_{i}$ has precisely $i$ farthest points,
$i=1,2$. This is proven using only elementary methods \cite{RV_flat}.

The paper is organized as follows. In Section \ref{SAS}, we recall some useful
facts about Alexandrov surfaces. In Section \ref{SNFP}, we investigate the
typical number of farthest points. There are certain similarities with the
original proof in \cite{Z2} but, as the central argument there does not hold
in our framework, we replaced it by a new one derived from the main result in
\cite{JR3}. The last two sections are devoted to the typical number of
segments between a point and its unique farthest point. Once again, the
original argument does not apply in our framework. The change of (the sign of)
the curvature bound forces us to develop a new geometric argument, while the
lack of extrinsic geometry leads us to consider, as an auxiliary space, the
space of all Alexandrov metrics on a given topological surface.


\section{Alexandrov surfaces\label{SAS}}

In this section we give necessary prerequisites and notation for the rest
of the paper. Other basic facts about Alexandrov surfaces, implicitly used in the paper,
can be found in \cite{BGP}, \cite{OS}, \cite{ST}.

For $A\in\mathcal{A}\left(  \kappa\right)  $ and $x\in A$, $\rho_{x}=d\left(
x,\cdot\right)  $ denotes the \textit{distance function from} $x$, and $F_{x}$
stands for the set of all \textit{farthest points from} $x$, that is, the
global maxima of $\rho_{x}$. The set of local maxima of $\rho_{x}$ will be
denoted by $M_{x}$. Moreover, $B\left(  x,r\right)  $ and $\bar{B}\left(
x,r\right)  $ stand respectively for the open and closed ball of radius $r$
centered at $x$.

The above notation assumes that the space $A$ and its metric are clear from
the context. In some cases, however, it will be necessary to specify the
metric space $A$ or the metric $d$ itself (\textsl{e.g.}, when several metrics
on the same space are considered), and we shall add a superscript: $F_{x}^{d}%
$, $M_{x}^{A}$, $\rho_{x}^{A}$, etc.

\medskip

Let $f:X\rightarrow Y$ be a map between metric spaces; the \emph{distortion}
of $f$ is given by
\[
\mathrm{dis}\left(  f\right)  =\sup_{x,x^{\prime}\in X}\left\vert d\left(
x,x^{\prime}\right)  -d\left(  f(x),f(x^{\prime})\right)  \right\vert \text{.}%
\]

\begin{lemma}
[Perel'man's stability theorem]\label{LP} \cit{Per1} Let $A_{n}$,
$A\in\mathcal{A}(\kappa)$ and suppose that there exist functions
$f_{n}:A\rightarrow A_{n}$ such that $\mathrm{dis}\left(  f_{n}\right)
\rightarrow0$. Then, for $n$ large enough, there exist homeomorphisms
$h_{n}:A\rightarrow A_{n}$ such that $\sup_{x\in A}d\left(  f_{n}\left(
x\right)  ,h_{n}\left(  x\right)  \right)  \rightarrow0$.
\end{lemma}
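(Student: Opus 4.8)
The plan is to recognize this as Perel'man's stability theorem and to follow his critical-point-theoretic architecture; I stress at the outset that this is a genuinely deep result, so I will only sketch the strategy. First I would note that $\mathrm{dis}(f_n)\to 0$ says precisely that the $f_n$ are Gromov--Hausdorff $\varepsilon_n$-approximations with $\varepsilon_n\to 0$, so $A_n\to A$ in the Gromov--Hausdorff sense. Because every space here is a surface, the dimension cannot drop: the convergence is \emph{non-collapsing}, which is exactly the hypothesis without which the conclusion fails.

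The heart of the argument is the local structure theory of Burago--Gromov--Perel'man. Call a point $(2,\delta)$-\emph{strained} if there are two pairs $(a_1,b_1),(a_2,b_2)$ whose comparison angles at the point are nearly $\pi$ within a pair and nearly $\tfrac{\pi}{2}$ across pairs; at such a point the map $q\mapsto\bigl(d(a_1,q),d(a_2,q)\bigr)$ is a bi-Lipschitz chart onto a region of $\mathbb{R}^2$, almost isometric as $\delta\to 0$. On a surface the non-strained set is governed by the points of concentrated curvature, where the space of directions is a circle shorter than $2\pi$, and this exceptional set is discrete. Hence $A$ is covered by finitely many almost-Euclidean distance charts together with small neighborhoods of finitely many cone points.

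Next I would invoke Perel'man's fibration theorem: a proper, \emph{regular} map to $\mathbb{R}^k$ assembled from distance functions (its gradients being in general position) is a locally trivial fiber bundle, so with target $\mathbb{R}^2$ and a surface domain it has $0$-dimensional fibers and is a local homeomorphism. The strained charts are regular, and --- the decisive transfer step --- regularity is an open condition stable under Gromov--Hausdorff convergence. Thus the analogous distance maps on $A_n$ built from the points $f_n(a_i)$ are regular for large $n$ and give local homeomorphisms agreeing with those on $A$ up to $O(\varepsilon_n)$; composing yields local homeomorphisms from neighborhoods $U_\alpha\subset A$ into $A_n$ that are $O(\varepsilon_n)$-close to $f_n$.

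The main obstacle is global. I would glue the finite atlas of local homeomorphisms into a single map $h_n$ using Perel'man's deformation of admissible maps: overlaps are reconciled by a controlled \emph{topological} isotopy (no smooth transversality is available), and each cone point is handled by a separate radial/coning argument on a small ball, which is a topological disk for large $n$ by non-collapsing. The delicate point is global injectivity of the glued map; this requires a general-position argument together with the uniqueness of the local fibration structure, and it is precisely here that the construction of a gradient-like flow for the distance functions carries the weight. Finally $\sup_{x}d\bigl(f_n(x),h_n(x)\bigr)\to 0$ follows because every adjustment made in the gluing was of size $O(\varepsilon_n)$.
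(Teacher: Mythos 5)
The paper offers no proof of this lemma at all: it is quoted as Perel'man's stability theorem with a bare citation to \cite{Per1}, so there is no internal argument to compare yours against. That said, your sketch correctly identifies the result and reproduces the architecture of Perel'man's actual proof --- the observation that $\mathrm{dis}(f_n)\to0$ makes the $f_n$ Gromov--Hausdorff approximations and that equal dimension rules out collapse, the Burago--Gromov--Perel'man strainer charts away from a discrete set of points of concentrated curvature, the fibration theorem for proper regular admissible maps, the stability of regularity under Gromov--Hausdorff approximation, and the gluing of local homeomorphisms by controlled topological deformations --- which is exactly the route in the cited source (expounded in detail in Kapovitch's survey of the theorem). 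Two caveats are worth recording. First, your last paragraph slightly misplaces the weight: global injectivity is not obtained by a separate general-position-plus-gradient-flow argument, but is delivered by Perel'man's gluing theorem itself, run as an induction on dimension and resting on deep controlled-topology input (Siebenmann's deformation theory and the local contractibility of homeomorphism groups, \'Cernavskii--Edwards--Kirby); the gradient-like flows live inside the proof of the fibration theorem, not at the gluing stage. Second, since everything here is two-dimensional, a considerably more elementary proof is available (for instance via triangulations of Alexandrov surfaces by small geodesic triangles, using the structure theory of Shiohama--Tanaka), though neither you nor the paper pursues it. As a blind attempt at a theorem the paper deliberately treats as a black box, your outline is a faithful map of the cited proof, but it is a map and not a proof: the fibration and gluing theorems you invoke carry essentially all of the difficulty.
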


We denote by $d_{H}^{Z}\left(  H,K\right)  $ the usual \emph{Pompeiu-Hausdorff
distance} between compact subsets $H$ and $K$ of a metric space $Z$, and omit
the superscript $Z$ whenever no confusion is possible. If $X$ and $Y$ are
compact metric spaces, we denote by $d_{GH}\left(  X,Y\right)  $ the
\emph{Gromov-Hausdorff distance} between $X$ and $Y$. Therefore $d_{GH}\left(
H,K\right)  \leq d_{H}^{Z}\left(  H,K\right)  $, and a partial converse is
given by the following lemma.

\begin{lemma}
\label{SEL} \cit{JR7} Let $\left\{  X_{n}\right\}  _{n\in\mathbb{N}}$ be a
sequence of compact metric spaces converging to $X$ with respect to the
Gromov-Hausdorff metric, and let $\left\{  \varepsilon_{n}\right\}
_{n\in\mathbb{N}}$ be a sequence of positive numbers. Then there exist a
compact metric space $Z$, an isometric embedding $\varphi:X\rightarrow Z$ and,
for each non-negative integer $n$, an isometric embedding $\varphi_{n}%
:X_{n}\rightarrow Z$, such that
\[
d_{H}^{Z}\left(  \varphi_{n}\left(  X_{n}\right)  ,\varphi\left(  Y\right)
\right)  <d_{GH}\left(  X_{n},X\right)  +\varepsilon_{n}.
\]

\end{lemma}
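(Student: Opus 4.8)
The plan is to realize all the spaces inside a single metric space by gluing, along the common factor $X$, optimal pairwise realizations of the Gromov--Hausdorff distances. First I would invoke the standard characterization of $d_{GH}$ by isometric embeddings: since $d_{GH}(X_n,X)$ is the infimum of Hausdorff distances over metrics on the disjoint union $X_n\sqcup X$ that extend the two given metrics, for each $n$ I can choose such a metric $d_n$ on $Z_n:=X_n\sqcup X$ with $d_H^{Z_n}(X_n,X)<d_{GH}(X_n,X)+\varepsilon_n$, where $d_n$ restricts to $d_{X_n}$ on $X_n$ and to $d_X$ on $X$.

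Next I would assemble these into one space. On the disjoint union $W:=X\sqcup\bigsqcup_n X_n$ I define a symmetric link function $\ell$ that agrees with $d_X$ on $X$, with $d_{X_n}$ on each $X_n$, and with $d_n$ between $X$ and each $X_n$, and then let $D$ be the induced chain pseudometric, $D(p,q)=\inf\sum_i \ell(p_i,p_{i+1})$ over finite chains $p=p_0,\dots,p_k=q$. By construction $D\le\ell$; the content is the reverse inequality on each piece. Using that in every $Z_n$ both $X$ and $X_n$ embed isometrically, a triangle-inequality argument shows that a chain joining two points of $X$, or a point of some $X_n$ to a point of $X$, is never shorter than the link value: any chain passing through an intermediate $X_m$ can be short-circuited through its isometric trace in the common hub $X$. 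Hence $D$ restricts to $d_X$ on $X$, to $d_{X_n}$ on each $X_n$, and to $d_n$ between $X$ and each $X_n$, so the inclusions $\varphi:X\hookrightarrow W$ and $\varphi_n:X_n\hookrightarrow W$ are isometric for $D$.

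The Hausdorff estimate is then immediate: for $a\in X_n$ one has $D(a,\varphi(X))\le\inf_{x\in X}d_n(a,x)\le d_H^{Z_n}(X_n,X)$, and symmetrically for $x\in X$, whence $d_H^{W}(\varphi_n(X_n),\varphi(X))\le d_H^{Z_n}(X_n,X)<d_{GH}(X_n,X)+\varepsilon_n$. It remains to obtain compactness. Since $d_{GH}(X_n,X)\to0$, the radii $r_n:=d_H^{Z_n}(X_n,X)\to0$, so for large $n$ every $X_n$ lies in the $r_n$-neighbourhood of the compact set $X$; combining a finite net of $X$ with finite nets of the finitely many remaining $X_n$ shows that $W$ is totally bounded, so its completion $Z$ is compact. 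As $X$ and each $X_n$ are already complete they keep the same traces in $Z$, and the Hausdorff bounds persist.

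The main obstacle is the verification that the glued chain pseudometric creates no shortcuts collapsing the intrinsic metrics, i.e.\ that $D$ genuinely restricts to the prescribed distances on each piece and between each $X_n$ and $X$; this rests entirely on the triangle inequality inside each $Z_n$ together with the role of $X$ as a common isometric hub through which all cross-chains reroute. A minor point to dispose of is that $D$ is a priori only a pseudometric: passing to the metric quotient is harmless, because distinct points of a single piece are never identified (the restriction there being faithful), so all isometric embeddings and Hausdorff bounds descend to the quotient and its compact completion.
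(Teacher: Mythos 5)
The paper itself contains no proof of this lemma --- it is quoted with a citation to \cite{JR7} --- so your proposal can only be compared with the standard argument, which is precisely the one you reconstruct: realize each $d_{GH}(X_n,X)$ up to $\varepsilon_n$ by an admissible metric $d_n$ on the disjoint union $X_n\sqcup X$, amalgamate all these two-piece spaces along the common copy of $X$, and pass to the completion. Your key verification is also the right one: the chain pseudometric creates no shortcuts, because any excursion of a chain into an intermediate piece $X_m$ enters and exits through points of $X$, and the triangle inequality in $Z_m$, together with the fact that $d_m$ restricts to $d_X$ on $X$, lets you replace the whole excursion by the direct link between its endpoints in $X$; after that, the triangle inequality in $Z_n$ alone bounds the chain from below. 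The Hausdorff estimate then follows exactly as you state. (A side remark: if each $d_n$ is a genuine metric, $D$ is automatically a metric --- the distance in $Z_n$ from a point $a\in X_n$ to the compact set $X$ is attained and positive --- so your precautionary metric quotient is not even needed, though it is harmless.)

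There is one step that is wrong as written, although easily repaired. In the compactness argument you assert that $r_n:=d_H^{Z_n}(X_n,X)\to 0$ ``since $d_{GH}(X_n,X)\to 0$.'' But the lemma allows an \emph{arbitrary} sequence of positive numbers $\varepsilon_n$; nothing in the hypothesis forces $\varepsilon_n\to 0$, and your choice of $d_n$ only guarantees $r_n<d_{GH}(X_n,X)+\varepsilon_n$. If, say, $\varepsilon_n\equiv 1$, the metrics $d_n$ you have fixed may keep every $X_n$ at distance roughly $1$ from $X$, in which case $W$ need not be totally bounded and its completion need not be compact. The fix is one line: shrinking $\varepsilon_n$ only strengthens the required strict inequality, so replace $\varepsilon_n$ by $\min(\varepsilon_n,2^{-n})$ at the outset; then $r_n\to 0$, your net argument (a finite net of $X$ absorbs all $X_n$ with $n$ large, finite nets handle the finitely many remaining pieces) gives total boundedness, and the completion $Z$ is compact with all embeddings and Hausdorff bounds preserved. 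With that amendment your proof is complete and coincides in substance with the argument of the cited source.
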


Consider two surfaces $S$ and $S^{\prime}$ with boundaries $\partial S$ and
$\partial S^{\prime}$, and two arcs $I\subset\partial S$ and $I^{\prime
}\subset\partial S^{\prime}$ having the same length. By \textit{gluing }%
$S$\textit{\ to }$S^{\prime}$\textit{\ along} $I$ we mean identifying the
points $x\in I$ and $\iota(x)\in I^{\prime}$, where $\iota:I\rightarrow
I^{\prime}$ is a length preserving map between $I$ and $I^{\prime}$.

\begin{lemma}
[Alexandrov's gluing theorem]\label{gluing} \cit{Pog} Let $S$ be a closed
topological surface obtained by gluing finitely many geodesic polygons cut out
from surfaces in $\mathcal{A}(\kappa)$, in such a way that the sum of the
angles glued together at each point is at most $2\pi$. Then, endowed with the
induced intrinsic metric, $S$ belongs to $\mathcal{A}(\kappa)$.
\end{lemma}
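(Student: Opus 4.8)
The plan is to reduce the global assertion to a local verification of Toponogov's comparison property and then to examine the metric near the gluing seams and the vertices. First I would note that $S$, being assembled from finitely many compact geodesic polygons by identifications along length-preserving maps, inherits a well-defined intrinsic length metric for which it is compact; as a complete, locally compact length space it admits shortest paths between any two of its points, so it is a geodesic space. Since a lower curvature bound is a local property that globalizes --- this is the content of the Toponogov globalization theorem --- it is enough to show that every point of $S$ has a neighborhood in which the triangle comparison inequality relative to the complete simply connected model surface $M_{\kappa}$ of constant curvature $\kappa$ holds.

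I would then distinguish three kinds of points: those in the interior of some polygon, those in the relative interior of a glued edge (the seams), and the vertices, where several polygon corners are identified. A point of the first kind has a neighborhood isometric to one in the Alexandrov surface from which its polygon was cut, so the comparison inequality is inherited directly from the hypothesis. The two remaining kinds carry the real content.

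At a vertex $v$ the hypothesis that the glued angles sum to $\omega\le 2\pi$ means that a neighborhood of $v$ is isometric to a metric cone, with total angle $\omega$, assembled from the curved sectors meeting at $v$. I would verify comparison there by developing these sectors into $M_{\kappa}$ and checking that, because $\omega\le 2\pi$, every shortest path issuing from $v$ behaves like a radial ray and the distances to $v$ dominate those in a single comparison triangle of $M_{\kappa}$. Heuristically the nonnegative deficit $2\pi-\omega$ records a nonnegative concentration of curvature at $v$, which is compatible with any lower bound $\kappa$.

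The hard part will be the seam points. Because the polygon edges are geodesics, an interior seam point has total angle $\pi+\pi=2\pi$, so locally $S$ is a pair of curvature-$\ge\kappa$ half-disks glued along a common geodesic boundary. To check comparison I would take a geodesic triangle meeting the seam and follow how its sides cross it: a shortest path joining points on opposite sides meets the seam and splits into two subarcs, one in each half-disk, where comparison already holds. Developing the two associated comparison triangles of $M_{\kappa}$ along their shared subarc, the angle balance forced at the crossing point --- the crossing must be \emph{straight}, subtending equal angles with the seam on the two sides --- should guarantee that no path cutting across the seam can undercut the intrinsic distance, which yields the comparison inequality. Proving rigorously, via Alexandrov's lemma on the assembly of comparison triangles, that the angle condition excludes such short-cuts is the crux of the argument and the step I expect to demand the most care.
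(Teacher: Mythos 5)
Note first that the paper offers no proof of Lemma \ref{gluing}: it is quoted from Pogorelov \cite{Pog} as a known result, so there is no internal argument to compare with; your sketch has to stand on its own. Its skeleton is indeed the standard one: pass to the induced length metric, invoke the Toponogov globalization theorem to reduce to a local comparison, and stratify $S$ into polygon interiors, seam interiors, and vertices. Your observation that an interior seam point has total angle exactly $2\pi$ is correct and well justified (at an interior point of a geodesic in a surface of curvature $\ge\kappa$ both side angles equal $\pi$, since the two tangent directions are at distance $\pi$ in a circle of length at most $2\pi$).

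However, the proposal has two genuine gaps. First, the seam case --- which you yourself identify as the crux --- is not actually carried out, and the reduction you describe is false as stated: a shortest path joining points on opposite sides of the seam need not meet it in a single crossing point. In general it meets the glued geodesic in a closed set; it may cross finitely or infinitely often, or run along a subarc of the seam. Any rigorous proof (as in \cite{Pog}, or in Petrunin's modern treatment of the gluing theorem) must handle arbitrary intersection patterns, typically by an induction on the number of crossings combined with a limiting argument, with Alexandrov's lemma applied repeatedly and the angle bookkeeping at each crossing controlled; none of this is present, so the heart of the theorem remains unproved. Second, the vertex case is treated incorrectly: a neighborhood of a vertex is \emph{not} a metric cone unless the incident sectors have constant curvature, and ``developing'' a variably curved Alexandrov sector into $\mathbb{M}_{\kappa}$ is not a defined operation --- development is a constant-curvature tool. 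The correct route is to glue the sectors successively along their geodesic boundary rays (reducing to the seam case you have not yet established) and then verify comparison separately for triangles having a vertex at the cone point, where the hypothesis $\omega\le 2\pi$ on the total angle enters; the heuristic that the deficit $2\pi-\omega\ge 0$ is ``compatible with any lower bound'' is not a substitute for this verification. As it stands, the proposal is a plausible roadmap reproducing the classical strategy, but both nontrivial local cases are left open or rest on an invalid step.
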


For any surface $A\in\mathcal{A}\left(  \kappa\right)  $, and any $x\in A$, we
denote by $\Sigma_{x}$ the set of directions at $x\in A$. It is known that
$\Sigma_{x}$ is isometric to a circle of length at most $2\pi$ \cite{BGP}. The
point $x$ is said to be \emph{conical} if the length of $\Sigma_{x}$ is less
than $2\pi$. The \emph{singular curvature} at the point $x$ is defined as
$2\pi$ minus the length of $\Sigma_{x}$.

Let $\mathbb{M}_{\kappa}$ denote the simply-connected and complete Riemannian
surface of constant curvature $\kappa$. A $\kappa$\emph{-polyhedron} is an
Alexandrov surface obtained by gluing finitely many geodesic polygons from
$\mathbb{M}_{\kappa}$.

Denote by $\mathcal{R}\left(  \kappa\right)  $ the set of all closed
Riemannian surfaces with Gauss curvature at least $\kappa$, and by
$\mathcal{P}\left(  \kappa\right)  $ the set of $\kappa$-polyhedra. A formal
proof for the next result can be found, for instance, in \cite{IRV2}.

\begin{lemma}
\label{Approximation} Both $\mathcal{P}\left(  \kappa\right)  $ and
$\mathcal{R}\left(  \kappa\right)  $ are dense in $\mathcal{A}(\kappa)$.
\end{lemma}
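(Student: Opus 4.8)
The plan is to prove density by exhibiting, for an arbitrary $A\in\mathcal{A}(\kappa)$ and any $\varepsilon>0$, a Riemannian surface (resp.\ a $\kappa$-polyhedron) within Gromov--Hausdorff distance $\varepsilon$ of $A$. Since the two claims are somewhat parallel, I would treat $\mathcal{P}(\kappa)$ first, because polyhedral approximation is the more elementary construction and it feeds into the Riemannian case.

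\medskip

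\textbf{Density of $\mathcal{P}(\kappa)$.} First I would produce a fine finite net on $A$: since $A$ is compact, pick points $x_1,\dots,x_N$ that are $\varepsilon$-dense. The idea is to triangulate $A$ using geodesic segments joining nearby net points, choosing the net fine enough that each triangle is small, hence (by the comparison property for curvature bounded below) close to a comparison triangle in $\mathbb{M}_\kappa$. I would then replace each small geodesic triangle by the corresponding comparison triangle cut from $\mathbb{M}_\kappa$, and reassemble these model triangles according to the same combinatorial pattern. By Alexandrov's gluing theorem (Lemma~\ref{gluing}), the resulting glued surface lies in $\mathcal{A}(\kappa)$, provided one checks the angle condition: the sum of the comparison angles meeting at each vertex must not exceed $2\pi$. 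This is exactly where the curvature bound is used, via the fact that angles in $\mathbb{M}_\kappa$ comparison triangles are no larger than the true angles on $A$, whose total at any point is at most $2\pi$. The construction yields a $\kappa$-polyhedron $P$, and a natural correspondence $f:A\to P$ sending each point to its image under the triangle replacement; estimating $\mathrm{dis}(f)$ in terms of the mesh of the triangulation gives $d_{GH}(A,P)\le \mathrm{dis}(f)/2\to 0$ as the net is refined.

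\medskip

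\textbf{Density of $\mathcal{R}(\kappa)$.} Having approximated $A$ by a $\kappa$-polyhedron $P$, it suffices to approximate each $\kappa$-polyhedron by a smooth Riemannian surface of curvature $\ge\kappa$; by the triangle inequality for $d_{GH}$ this proves $\mathcal{R}(\kappa)$ dense. The strategy is to smooth $P$ near its singular set---the isolated conical points and the edges---replacing the singular (positively concentrated) curvature by a smooth curvature distribution that stays $\ge\kappa$, while leaving the metric essentially unchanged away from a small neighborhood of the singularities. One does this by rounding each cone point and each edge at scale $\delta\ll\varepsilon$; the metric change is supported in a set of diameter $O(\delta)$, so the identity map (off the modified region) has distortion $O(\delta)$, whence $d_{GH}\to 0$.

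\medskip

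\textbf{Main obstacle.} The hard part is not the distortion bookkeeping but ensuring the two curvature-bound conditions survive the constructions. In the polyhedral case, one must verify that the comparison-triangle angles indeed sum to at most $2\pi$ at every vertex, so that Lemma~\ref{gluing} applies; this requires care when a net point is conical or when triangles degenerate, and is precisely the point where the lower curvature bound on $A$ is essential. In the smoothing case, the delicate point is that the rounding of a conical point carrying \emph{positive} singular curvature can be done keeping Gauss curvature $\ge\kappa$ (the excess angle spreads out as positive smooth curvature), whereas one must confirm that smoothing an edge, or any point of zero or small singular curvature, does not force the curvature below $\kappa$. Since the excerpt states that a formal proof is available in \cite{IRV2}, I would at this level of detail invoke that reference for the technical verification of the curvature bound and concentrate the exposition on the net-and-gluing construction and the distortion estimate.
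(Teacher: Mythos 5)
The paper offers no internal proof of Lemma \ref{Approximation}: it simply refers to \cite{IRV2} for a formal proof, so there is no in-text argument to compare yours against step by step. Your sketch is, however, the standard construction and essentially the one behind the cited reference: triangulate finely by geodesics, replace each small triangle by its comparison triangle in $\mathbb{M}_\kappa$, reassemble via Alexandrov's gluing theorem (Lemma \ref{gluing}), and then smooth the singularities. Your verification of the gluing hypothesis runs in the correct direction: for curvature bounded below, comparison angles are at most the true angles, and the total angle at any point is at most $2\pi$ because $\Sigma_x$ is a circle of length at most $2\pi$, so the glued comparison angles sum to at most $2\pi$. Two points deserve correction or emphasis. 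First, the existence of geodesic triangulations of arbitrarily small mesh on an Alexandrov surface is itself a nontrivial input (it goes back to Alexandrov and Zalgaller) and should be cited rather than taken for granted; this, together with non-degeneracy of the triangles and the fact that the angles of the triangles incident to a vertex sum to the full angle there, is where the real technical work of the polyhedral step lies. Second, in the smoothing step you propose to round the \emph{edges} of $P$, but the edges of a $\kappa$-polyhedron are intrinsically smooth: at an interior point of an edge the two adjacent polygons each contribute angle $\pi$, so the glued metric is locally isometric to $\mathbb{M}_\kappa$ there. The singular set is exactly the finite set of conical vertices, each carrying \emph{positive} singular curvature, and rounding such a vertex while keeping Gauss curvature $\geq\kappa$ is possible precisely because the excess is positive (for instance by a rotationally symmetric cap in geodesic polar coordinates), so the worry you raise about points of zero or small singular curvature is vacuous --- such points are already smooth. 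With these corrections your outline is sound, and your decision to defer the remaining technical verifications to \cite{IRV2} matches the paper's own treatment, which consists of nothing more than that citation; note also that your pipeline (polyhedra first, then smoothing) is consistent with how the paper itself later uses this lemma in the proof of Lemma \ref{LAR}.
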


A \emph{segment} on the surface $A$ is a shortest path on $A$. The set of all
segments between two points $x, y \in A$ will be denoted by $\mathfrak{S}%
_{xy}$.

The \emph{cut locus} $C(x)$ of a point $x\in A$ is the set of all extremities,
different from $x$, of maximal (with respect to inclusion) segments starting
at $x$. It is known that $C(x)$ is locally a tree with at most countably many
ramifications points \cite{ST}.

The following lemma is a direct consequence of Theorem 2 in \cite{z-cl} and
the main result in \cite{AZ_2}.

\begin{lemma}
\label{EndPoints} For most $A\in\mathcal{B}\left(  \kappa\right)  $ and any
$x\in A$, most points of $A$ belong to $C\left(  x\right)  $ and are joined to
$x$ by a unique segment.
\end{lemma}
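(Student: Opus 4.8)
The plan is to establish the two conclusions separately and intersect the resulting residual sets. The membership assertion is the easy half. By Lemma~\ref{LAZ}, for most $A\in\mathcal{B}(\kappa)$ the set $E$ of endpoints of $A$ is residual; since an endpoint lies in the cut locus of every point, $E\subseteq C(x)$ for \emph{every} $x\in A$, and therefore $C(x)$ is residual for every $x$. This already yields that most points of $A$ belong to $C(x)$, and it is the only place where the genericity of $A$ enters.

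For the uniqueness assertion I would prove the purely metric fact that, for \emph{every} $A$ and \emph{every} $x$, the set $N_x=\{\,y\in A:\mathfrak{S}_{xy}\text{ contains at least two segments}\,\}$ is meager. For $\varepsilon>0$ set
\[
N_x^{\varepsilon}=\{\,y\in A:\ \exists\,\sigma,\tau\in\mathfrak{S}_{xy}\text{ with }d_H(\sigma,\tau)\geq\varepsilon\,\}.
\]
First, $N_x^{\varepsilon}$ is closed: if $y_n\to y$ with segments $\sigma_n,\tau_n\in\mathfrak{S}_{xy_n}$ satisfying $d_H(\sigma_n,\tau_n)\geq\varepsilon$, then, the segments being unit-speed curves of length at most $\operatorname{diam}A$, Arzel\`a--Ascoli produces uniformly convergent subsequences whose limits are segments from $x$ to $y$ still at Hausdorff distance at least $\varepsilon$, hence distinct. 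Secondly, a point carrying two distinct segments from $x$ cannot be interior to any segment issuing from $x$ (a detour through the second segment would otherwise create a non-minimizing corner), so $N_x^{\varepsilon}\subseteq C(x)$. Since $C(x)$ is locally a tree, it is $1$-dimensional and hence has empty interior in $A$; thus $N_x^{\varepsilon}$ is closed with empty interior, i.e. nowhere dense. As $N_x=\bigcup_{n\geq1}N_x^{1/n}$, the set $N_x$ is meager, so its complement $U_x$ of points joined to $x$ by a unique segment is residual.

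Intersecting, for most $A$ and every $x$ both $C(x)$ and $U_x$ are residual, whence so is $C(x)\cap U_x$; this is exactly the set of points of $C(x)$ joined to $x$ by a single segment, proving the lemma. The delicate point to get right is the coexistence of ``$C(x)$ residual, hence dense'' with ``$N_x$ meager'': the reconciliation rests on the structural input that $C(x)$, although dense, is locally a tree and so has empty interior, which forces every closed $N_x^{\varepsilon}$ to be nowhere dense. In other words, the bulk of $C(x)$ is carried by its (residually many) endpoints, each joined to $x$ by one segment, while the multiply covered points lie on its $1$-dimensional skeleton. One could instead identify $N_x$ with the non-endpoints of the tree $C(x)$ via Lemma~\ref{deg}, but the semicontinuity argument above is shorter and sidesteps any discussion of conjugate-type endpoints.
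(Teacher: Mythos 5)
Your proof is correct, but its uniqueness half takes a genuinely different route from the paper's. Both arguments obtain the membership assertion identically, from Lemma \ref{LAZ} plus the observation that endpoints lie in every cut locus. For uniqueness, however, the paper invokes Theorem 2 of \cite{z-cl} (the multijoined locus, i.e.\ the set of non-extremities of $C(x)$, is of first category in \emph{any} $A$) and then must still exclude extremities of $C(x)$ joined to $x$ by a continuum of segments: it shows that the set of directions at such a point is connected and, if not a singleton, contains a Jordan arc $U$, producing an injective continuous map $\Phi$ from $U\times I$ into $A$ whose image has nonempty interior (invariance of domain) yet consists of non-endpoints --- contradicting the density of endpoints on the typical $A$. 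So the paper uses genericity of $A$ twice, whereas you prove the stronger intermediate fact that $N_x$ is meager on \emph{every} $A$ and for \emph{every} $x$: your closed slices $N_x^{\varepsilon}$ (closedness via Arzel\`a--Ascoli, since uniform limits of segments are segments and the $\varepsilon$-separation in Hausdorff distance persists) sit inside $C(x)$, which has empty interior, hence are nowhere dense. Note that it is essential, as you arranged, that the slices themselves be closed: on the typical surfaces in question $C(x)$ is dense (hence not closed!), so nothing follows from $N_x\subset C(x)$ alone; you handled exactly this delicate point. If you prefer not to lean on the tree structure from \cite{ST} for the empty interior of $C(x)$, observe that by non-branching of geodesics in curvature bounded below, an interior point of a segment emanating from $x$ is never a cut point of $x$, and such points approximate every point of $A$ along a segment to it. One caution about your closing sentence: identifying $N_x$ with the non-extremities of $C(x)$ via Lemma \ref{deg} would \emph{not} work, since that lemma counts connected components of $\mathfrak{S}_{xy}$, and an extremity of $C(x)$ may a priori be joined to $x$ by a one-parameter family of segments forming a single component; ruling this out is precisely the job of the paper's invariance-of-domain argument, which your semicontinuity argument happily sidesteps. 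What each approach buys: yours is more elementary and valid on all Alexandrov surfaces, while the paper's yields a bit more structure, namely that on a typical surface most points are extremities of the cut tree $C(x)$.
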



\section{Typical number of farthest points\label{SNFP}}

The aim of this section is to prove the following partial generalization of
Theorem 2 in \cite{Z2}.

\begin{theorem}
\label{TH2} On most $A\in\mathcal{B}\left(  \kappa\right)  $, most $x\in A$
have precisely one farthest point.
\end{theorem}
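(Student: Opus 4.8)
The plan is to combine a Baire-category argument on the surface $A$ with the genericity of $A$ itself, exploiting the auxiliary result imported from \cite{JR3}. The statement asserts a \emph{most--most} conclusion: for most $A\in\mathcal{B}(\kappa)$, the set of $x\in A$ with a non-unique farthest point is of first category in $A$. I would first fix a typical surface $A$ (typical in the sense that it enjoys Lemma \ref{EndPoints} and Lemma \ref{LAZ}) and reformulate the target as a statement about a single distance-type function. The key observation is that $x$ has at least two farthest points exactly when the global maximum of $\rho_x$ is attained at two distinct points, i.e.\ when the ``second-largest local maximum value'' equals the diameter-type quantity $\max_A\rho_x = \operatorname{diam}_x(A)$. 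So I would introduce the function $x\mapsto F_x$ (the farthest-point set-valued map) and try to show its multivaluedness locus is meager.

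The central step, which replaces the extrinsic argument of \cite{Z2}, is to invoke the main result of \cite{JR3} in the following form: on a typical Alexandrov metric, the set of points $x$ for which the maximum of $\rho_x$ is attained at more than one point is small. Concretely, I would define, for each pair of positive rationals $p<q$, the set
\[
G_{p,q}=\seti{x\in A : \text{$\rho_x$ has two distinct local maxima with values in }[p,q]\text{ that are both global}},
\]
and argue that the bad set $\seti{x : \#F_x\ge 2}$ is contained in a countable union of such sets. Each $G_{p,q}$ should be nowhere dense: if $x$ had two farthest points $y_1,y_2$, then by Lemma \ref{EndPoints} (applied with roles reversed, using that $y_i$ are endpoints for typical $A$) a generic small perturbation of $x$ along the surface would break the tie, strictly increasing the distance to one of the two candidate maxima and decreasing it to the other. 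This uses the first variation formula for $\rho_x$ together with the fact that, for a typical $A$, the farthest points $y_i$ are joined to $x$ by segments whose initial directions can be separated.

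I would carry out the steps in this order: (1) reduce to showing the bad set is a countable union of nowhere-dense sets via the $G_{p,q}$ decomposition; (2) establish that $G_{p,q}$ is closed, using upper semicontinuity of $\rho_x$ in $x$ and compactness of $A$ to pass maxima to limits; (3) establish that $G_{p,q}$ has empty interior, by showing that near any $x$ with two farthest points there exist nearby points with a unique farthest point, which is precisely where the tie-breaking perturbation and the input from \cite{JR3} enter. Finally, (4) apply the Kuratowski--Ulam theorem (the Baire-category Fubini) to lift the ``most $x$'' conclusion on each typical $A$ to a genuine most--most statement over $\mathcal{B}(\kappa)$, after verifying that the set of pairs $(A,x)$ with non-unique farthest point is suitably measurable/has the Baire property in the product.

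The hard part will be step (3), the emptiness of the interior of $G_{p,q}$, and in particular making the tie-breaking rigorous in the Alexandrov setting. In the smooth convex case one perturbs the surface extrinsically, but here the curvature-bound constraint and the absence of an ambient $\mathbb{R}^3$ forbid that; instead one must perturb the basepoint $x$ intrinsically and control how the competing distances $d(x,y_1)$ and $d(x,y_2)$ change. The delicate issue is that moving $x$ may also move the locations of the maxima $y_1,y_2$, so I expect to need the stability theorem (Lemma \ref{LP}) together with the semicontinuity of the farthest-point map, and the main input from \cite{JR3} precisely to guarantee that the set of problematic configurations cannot fill an open set. Showing that the two initial segment directions at $x$ toward $y_1$ and $y_2$ are genuinely distinct for a typical surface—so that a single perturbation direction strictly separates the two distances—will be the crux.
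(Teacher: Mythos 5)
There is a genuine gap, and it lies exactly where you located the difficulty: step (3) cannot be carried out on a fixed typical surface, and the whole order of quantifiers in your plan is off. You fix a typical $A$ (typical only for Lemmas \ref{LAZ} and \ref{EndPoints}) and then try to prove intrinsically that the bad set $\{x:\#F_x\ge 2\}$ is meager by a first-variation tie-breaking perturbation of the basepoint. But no such intrinsic argument can exist: there are Alexandrov surfaces on which \emph{every} point has two farthest points — by Lemma \ref{LFT}, on an open dense family of flat tori $\#F_x=2$ for all $x$, and flat tori belong to $\mathcal{B}(-1)=\mathcal{A}(-1)$. On such a surface, moving $x$ moves both farthest points with it and never breaks the tie; the first variation formula separates $d(\cdot,y_1)$ from $d(\cdot,y_2)$ only for the \emph{frozen} pair $(y_1,y_2)$, while the actual maximizers migrate. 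So emptiness of the interior of the bad set is a property of \emph{generic surfaces}, not of all surfaces satisfying Lemmas \ref{LAZ} and \ref{EndPoints}, and assuming "$A$ typical" strong enough to make the perturbation work is circular — the needed typicality is essentially the theorem itself. A secondary defect: your sets $G_{p,q}$ are not closed, since all global maxima of $\rho_x$ share the same value, so constraining values to $[p,q]$ does not prevent two distinct maximizers from merging in a limit; the correct quantification is spatial, $\mathrm{diam}\,F_x\ge\varepsilon$, as in the paper's sets $T(A,\varepsilon)$.

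The paper's proof runs the Baire argument in $\mathcal{B}(\kappa)$ itself, encoding the most--most statement directly: it sets $\mathcal{M}=\{A\in\mathcal{B}(\kappa):T(A)\text{ not meager}\}$ and covers it by the sets $\mathcal{M}(\varepsilon,\eta)$ of surfaces containing a ball $\bar{B}(x,\eta)\subset T(A,\varepsilon)$. These are closed by Lemma \ref{SEL} (common embedding) together with the semicontinuity Lemma \ref{lim_Fx}, and they have empty interior by perturbing the \emph{surface}, not the point: Lemma \ref{LAR} approximates any $A\in\mathcal{B}(\kappa)$ by Riemannian surfaces of curvature $\kappa_n>\kappa$ (the strict inequality creates $C^2$-room below the curvature bound), and then Lemma \ref{LRR} — the result of \cite{JR3} you wanted to invoke — supplies $C^2$-close metrics on which most points have a unique farthest point; for large $p$ these land in $\mathcal{M}(\varepsilon,\eta)\cap\mathcal{A}(\kappa)$, contradicting the fact that a surface in $\mathcal{M}(\varepsilon,\eta)$ carries a nonempty open ball of points with $\#F_x\ge 2$. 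This is the only place genericity enters, and it replaces your tie-breaking entirely. Finally, your step (4) is both inapplicable and unnecessary: Kuratowski--Ulam requires a product space, whereas the pairs $(A,x)$ with $x\in A$ form no product (the fiber varies with $A$), and you never establish meagerness of the set of bad pairs anyway; once the statement is phrased as meagerness of $\mathcal{M}$, no Fubini-type lifting is needed.
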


T. Zamfirescu strengthened Theorem 2 in \cite{Z2} by proving that on any
convex surface $S$, the set of those $x\in S$ for which $F_{x}$ contains more
than one point is $\sigma$-porous \cite{z-ep}. This statement is not true for
all Alexandrov surfaces, as one can easily see for the standard projective
plane, but it would be interesting to prove it for most surfaces
$A\in\mathcal{B}\left(  \kappa\right)  $. With this respect, our Theorem
\ref{TH2} gives a partial answer to the last open problem in \cite{z-ep}.

Note that Theorem \ref{TH2} also admits the following variant in the framework
of Riemannian geometry.

\begin{lemma}
\label{LRR}\cit[Theorems 1 and 2]{JR3} Let $S$ be a closed differentiable
manifold and let $\mathcal{G}$ be the set of all $C^{2}$ Riemannian structures
on $S$, endowed with the $C^{2}$ topology. Then, for most $g\in\mathcal{G}$
and most $x\in S$, $x$ admits a unique farthest point with respect to $g$, to
which it is joined by at most three segments.
\end{lemma}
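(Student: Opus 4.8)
The plan is to realise the asserted property as a residual subset of the product $\mathcal{G}\times S$ and then descend to the factor $\mathcal{G}$. Recall that $\mathcal{G}$ is an open subset of the separable Banach space of $C^{2}$ symmetric $2$-tensors on the compact manifold $S$; hence $\mathcal{G}$ is completely metrizable and second countable, $\mathcal{G}\times S$ is a Baire space, and the Kuratowski--Ulam theorem (the Baire-category analogue of Fubini's theorem) applies. Consequently it would suffice to prove that
\[
R=\left\{(g,x)\in\mathcal{G}\times S:\ F_{x}^{g}\text{ is a single point joined to }x\text{ by at most three segments}\right\}
\]
is residual in $\mathcal{G}\times S$: Kuratowski--Ulam then yields that for most $g$ the section $\{x:(g,x)\in R\}$ is residual in $S$, which is exactly the claim. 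Throughout I would use that $C^{2}$-convergence of metrics forces uniform convergence of the associated distance functions on the compact manifold $S$, so that $(g,x,y)\mapsto d_{g}(x,y)$ and the maximal value $D_{g}(x)=\max_{z}d_{g}(x,z)$ are continuous.

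\emph{Uniqueness.} I would first show that $\{(g,x):F_{x}^{g}\text{ is a single point}\}$ is residual by writing its complement inside $\bigcup_{n}B_{n}$, where $B_{n}=\{(g,x):\operatorname{diam}F_{x}^{g}\ge 1/n\}$. Each $B_{n}$ is closed: if $(g_{m},x_{m})\to(g,x)$ carry farthest points $y_{m},y'_{m}$ at distance $\ge 1/n$, then compactness of $S$ together with the uniform convergence of distances produces limit points $y,y'\in F_{x}^{g}$ at distance $\ge 1/n$. It then remains to prove that each $B_{n}$ has dense complement, that is, that every $(g,x)$ admits an arbitrarily $C^{2}$-small perturbation of $g$ confining all farthest points to one small ball. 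I would fix $y_{0}\in F_{x}^{g}$ and a small radius $r$ and multiply $g$ by a conformal factor equal to $1+c$ near $y_{0}$ and to $1$ outside $B(y_{0},r)$. Distances only increase, and $d_{g'}(x,y_{0})\ge D_{g}(x)+cr/2$ because every segment reaching $y_{0}$ crosses the inflated core. The key geometric observation is that, since $y_{0}$ is already a farthest point, any competitor $z$ whose $g$-segment to $x$ enters $B(y_{0},r)$ while satisfying $d_{g}(x,z)\ge D_{g}(x)-r$ must itself lie in $B(y_{0},2r)$; a short triangle-inequality estimate then gives $d_{g'}(x,z)<d_{g'}(x,y_{0})$ for every $z\notin B(y_{0},2r)$. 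Hence $F_{x}^{g'}\subset B(y_{0},2r)$ has diameter $<1/n$, so $(g',x)\notin B_{n}$; being closed with dense complement, each $B_{n}$ is nowhere dense, and uniqueness is residual.

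\emph{Three segments.} I would then work at a pair $(g,x)$ with unique farthest point $y=F_{x}^{g}$ and let $u_{1},\dots,u_{k}\in\Sigma_{y}$ be the directions at $y$ of the segments joining $y$ to $x$. The first-variation formula gives, for every unit $w\in T_{y}S$, that the right derivative at $0$ of $t\mapsto d_{g}(x,\exp_{y}(tw))$ equals $-\max_{i}\langle w,u_{i}\rangle$; since $y$ maximizes $d_{g}(x,\cdot)$, this is $\le 0$ for all $w$, which is precisely the condition $0\in\mathrm{conv}\{u_{1},\dots,u_{k}\}$ in $T_{y}S\cong\mathbb{R}^{2}$. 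By Carath\'{e}odory's theorem in the plane, some three of the $u_{i}$ already contain $0$ in their convex hull, so the minimality condition never requires more than three segments; the content of the statement is that the surplus can be removed generically. I would therefore show, again by the closed-plus-nowhere-dense scheme, that the set of $(g,x)$ whose unique farthest point is joined to $x$ by $\ge 4$ segments is meager. For nowhere-density I would inflate $g$ in a thin tube around the interior of one superfluous segment, chosen disjoint from the other three and from $x$ and $y$; this destroys its minimality while leaving the remaining segments and the value $d_{g}(x,y)=D_{g}(x)$ unchanged, after which a confinement perturbation as above keeps $y$ the unique farthest point. Intersecting the two residual sets would prove $R$ residual.

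The main obstacle is exactly these two perturbation lemmas, for a reason that is structural rather than computational: farthest points lie on the cut locus, where $\rho_{x}$ fails to be differentiable and smooth transversality does not apply, so the perturbations must be constructed and estimated by hand. The genuinely delicate point is to guarantee that a tie-breaking or segment-removing perturbation does not merely relocate the trouble by spawning a new competing maximum elsewhere; this is controlled precisely by the confinement estimate above, which pins every near-maximal point to a neighbourhood of the chosen farthest point. Checking that the tube perturbation deletes one segment without creating a new farthest point carrying its own surplus of segments is the step I expect to require the most care.
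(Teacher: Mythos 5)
First, note that the paper contains no proof of this lemma to compare against: it is imported verbatim from \cite{JR3} (Theorems 1 and 2 there), so your attempt can only be measured against what such a proof requires, and against the paper's proofs of the analogous Alexandrov statements (Theorem \ref{TH2} and Lemmas \ref{T2}, \ref{y_conical}, \ref{01}, \ref{F_1}, \ref{Fix_A}). With that caveat, your uniqueness half is essentially sound. The sets $B_{n}$ are closed because $C^{2}$ (indeed $C^{0}$) convergence of metrics gives uniform convergence of distance functions; the confinement estimate is correct once the amplitude satisfies $c\ll\min\bigl(r/D_{g}(x),\,r^{2}\bigr)$ (the $r^{2}$ being needed so that the bump is $C^{2}$-small); and the Kuratowski--Ulam reduction is legitimate since both factors are second countable and completely metrizable. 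Interestingly, this direct perturbation is exactly the step the present paper cannot perform at the Alexandrov level and instead obtains by quoting this very lemma, so your argument supplies the kind of content the citation hides. Two small repairs: the tube should surround a compact subarc of the interior of the superfluous segment (distinct minimizers meet at their endpoints, so a tube around the whole interior cannot avoid the other three), and to make the $\ge 4$-segment sets closed you must use $\varepsilon$-separated quadruples of segments, as in the paper's Lemma \ref{T2}, since segments can accumulate.

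The three-segment half, however, has a genuine gap, and it sits exactly where you flagged it --- but it is not a matter of ``care''; the plan fails as stated. Carath\'{e}odory gives a triple with $0\in\mathrm{conv}\{u_{i},u_{j},u_{k}\}$, but for $y$ to survive as a strict maximum after deleting the fourth segment you need $0$ in the \emph{interior} of that hull, and there are four-segment configurations in which no triple has this property: let the directions at $y$ form two antipodal pairs (equivalently, two of the segments form what the paper calls a long loop). Then every triple contains an antipodal pair, $0$ lies on an edge of each triangle, and after inflating a tube around the fourth segment the first variation of $\rho_{x}$ at $y$ vanishes in the direction of the deleted segment. Worse, the inflation raises the distances of precisely the points lying in that direction (their old shortest paths ran through the corridor you inflated), so new farthest points appear beyond $y$, and the corridor splitting around the tube tends to reproduce a long loop at the new maximum --- the ``relocation of trouble'' you mention is not an edge case but the generic behaviour on this configuration, and shrinking the tube or the amplitude does not remove the angular degeneracy. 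This is why the paper's Alexandrov analogue (Lemma \ref{Fix_A}) does not delete a segment but performs surgery on the long loop: it cuts along it and inserts a lens whose circumcenter becomes the new unique farthest point, joined by exactly three segments with consecutive angles less than $\pi$ (Lemma \ref{01}), after first making the relevant points conical (Lemma \ref{y_conical}) and, in the spherical range $d(x,y)>\pi/2$, invoking a radius-type theorem (Lemma \ref{F_1}) to certify that the new maximum is the unique global one. A correct Riemannian proof needs a construction of this kind --- that is what \cite{JR3} supplies; your tube inflation covers only the nondegenerate case where some triple already has $0$ in the interior of its convex hull.
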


For the proof of Theorem \ref{TH2} we need two more lemmas.

\begin{lemma}
\label{lim_Fx} Let $A_{n}$, $A\in\mathcal{A}(\kappa)$ be isometrically
embedded in some compact metric space $Z$. Assume that $A_{n}\rightarrow A$
with respect to the Pompeiu-Hausdorff distance in $Z$. Let $x_{n}\in A_{n}$
converge to $x\in A$. If $y_{n}\in F_{x_{n}}^{A_{n}}$ and $y\in A$ such that
$y_{n}\rightarrow y$ then $y\in F_{x}^{A}$.
\end{lemma}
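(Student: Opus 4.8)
The plan is to show that $\rho_x^A(y)=\sup_{z\in A}\rho_x^A(z)$, i.e.\ that $y$ realizes the maximal distance from $x$ in $A$. I would argue by contradiction: suppose $y\notin F_x^A$, so there is a point $z\in A$ with $\rho_x^A(z)>\rho_x^A(y)$. The guiding idea is that distances converge under Pompeiu--Hausdorff convergence inside the common space $Z$, so the value $\rho_{x_n}^{A_n}(y_n)=d(x_n,y_n)$ should converge to $d(x,y)=\rho_x^A(y)$, while the competitor $z$ should have approximants in $A_n$ whose distance from $x_n$ approaches $d(x,z)$; since $y_n$ is \emph{farthest} in $A_n$, comparing these two quantities will force $d(x,z)\le d(x,y)$, contradicting the choice of $z$.

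Concretely, first I would use continuity of the metric of $Z$: because $x_n\to x$ and $y_n\to y$ in $Z$, we have $d(x_n,y_n)\to d(x,y)$. Here one must be slightly careful that $d(x_n,y_n)$ is the distance computed \emph{in} $A_n$ and $d(x,y)$ the distance in $A$; but since the $A_n$ and $A$ are isometrically embedded in $Z$, the intrinsic distances coincide with the ambient distances $d^Z$, so this step is just continuity of $d^Z$. Second, given the competitor $z\in A\subset Z$, I would use $A_n\to A$ in the Pompeiu--Hausdorff sense to choose points $z_n\in A_n$ with $z_n\to z$ in $Z$; again by continuity $d(x_n,z_n)\to d(x,z)$. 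Third, since $y_n\in F_{x_n}^{A_n}$ and $z_n\in A_n$, the defining maximality property gives $d(x_n,z_n)\le d(x_n,y_n)$ for every $n$. Passing to the limit yields $d(x,z)\le d(x,y)=\rho_x^A(y)$, contradicting $\rho_x^A(z)>\rho_x^A(y)$. Hence no such $z$ exists and $y\in F_x^A$.

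The main subtlety, which I expect to be the only real obstacle, is justifying that the intrinsic distances within $A_n$ agree with the ambient metric of $Z$ and that they converge to the intrinsic distance within $A$. Since the hypothesis states the embeddings are isometric, intrinsic and ambient distances coincide on each space, so the continuity argument is clean; the one place to be attentive is that Pompeiu--Hausdorff convergence only guarantees the \emph{existence} of approximants $z_n\to z$, not a canonical choice, but any such sequence suffices for the inequality. I would phrase the whole argument as a short limit computation, so that no compactness beyond the boundedness already present in $Z$ is needed.
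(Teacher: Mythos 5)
Your proof is correct, and it takes a slightly different route than the paper's, though built from the same ingredients. The paper first isolates a quantitative continuity statement for the radius function: writing $\varrho_{x}^{X}$ for $\max_{w\in X}d\left(  x,w\right)  $ where $X\subset Z$ is compact and $x\in X$, it proves the Lipschitz-type bound $\left\vert \varrho_{x}^{X}-\varrho_{x^{\prime}}^{X^{\prime}}\right\vert \leq d^{Z}\left(  x,x^{\prime}\right)  +d_{H}^{Z}\left(  X,X^{\prime}\right)  $, and then concludes in one line via $d^{A}\left(  x,y\right)  =\lim d^{A_{n}}\left(  x_{n},y_{n}\right)  =\lim\varrho_{x_{n}}^{A_{n}}=\varrho_{x}^{A}$. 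You instead fix an arbitrary competitor $z\in A$, approximate it by $z_{n}\in A_{n}$ (which exists exactly because $d_{H}^{Z}\left(  A_{n},A\right)  \rightarrow0$; compactness of $A_{n}$ even allows a nearest-point choice with $d^{Z}\left(  z_{n},z\right)  \leq d_{H}^{Z}\left(  A_{n},A\right)  $), and pass to the limit in the maximality inequality $d\left(  x_{n},z_{n}\right)  \leq d\left(  x_{n},y_{n}\right)  $. Both arguments rest on the same three facts --- the isometric embeddings identify intrinsic with ambient distances, $d^{Z}$ is continuous, and $y_{n}$ is farthest in $A_{n}$ --- and indeed the paper's key inequality is essentially your comparison carried out with a nearest-point approximant of the farthest point. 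What the paper's packaging buys is a reusable quantitative fact (joint $1$-Lipschitz continuity of the radius in the point and the set, in the spirit of the stability results used throughout); what yours buys is brevity and locality. One cosmetic remark: your contradiction framing is unnecessary, since your limit computation directly yields $d\left(  x,z\right)  \leq d\left(  x,y\right)  $ for every $z\in A$, which is precisely $y\in F_{x}^{A}$.
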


\begin{proof}
If $X$ is a compact subset of $Z$, and $x$ belongs to $X$, we denote by
$\varrho_{x}^{X}$ the radius of $X$ at $x$, \ie, the maximal value of the
distance function from $x$ restricted to $X$. We claim that $\varrho$ is
continuous with respect to $x$ and $X$. Let $X$, $X^{\prime}$ be two compact
subsets of $Z$, let $y\in X$ be a farthest point from $x\in X$, let
$x^{\prime}\in X^{\prime}$, and let $y^{\prime}\in X^{\prime}$ be a closest
point to $y$. Then%
\begin{align*}
\varrho_{x}^{X}  &  =d^{X}\left(  x,y\right) \\
&  \leq d^{Z}\left(  x,x^{\prime}\right)  +d^{X^{\prime}}\left(  x^{\prime
},y^{\prime}\right)  +d^{Z}\left(  y^{\prime},y\right) \\
&  \leq d^{Z}\left(  x,x^{\prime}\right)  +\varrho_{x^{\prime}}^{X^{\prime}%
}+d_{H}^{Z}\left(  X,X^{\prime}\right)  \text{.}%
\end{align*}
Exchanging the roles of $X$ and $X^{\prime}$ we get%
\[
\left\vert \varrho_{x}^{X}-\varrho_{x^{\prime}}^{X^{\prime}}\right\vert \leq
d^{Z}\left(  x,x^{\prime}\right)  +d_{H}^{Z}\left(  X,X^{\prime}\right)
\text{,}%
\]
thus proving the claim.

Now $d^{A}\left(  x,y\right)  =\lim d^{A_{n}}\left(  x_{n},y_{n}\right)
=\lim\varrho_{x_{n}}^{A_{n}}=\varrho_{x}^{A}$, whence $y\in F_{x}^{A}$.
\end{proof}

\begin{lemma}
\label{LAR} Let $A\in\mathcal{B}\left(  \kappa\right)  $. Then $A$ can be
approximated by Riemannian surfaces with Gaussian curvature strictly larger than $\kappa$.
\end{lemma}

\begin{proof}
If $\kappa\neq0$, the conclusion is reached by contractions or dilations (see
Remark \ref{k}) and by the density of $\mathcal{R}\left(  \kappa^{\prime
}\right)  $ in $\mathcal{A}(\kappa^{\prime})$. We prove here that each
$A\in\mathcal{A}\left(  0,1\right)  \cup\mathcal{A}\left(  0,2\right)  $ can
be approximated by Riemannian surfaces of positive curvature.

We shall show that $A$ can be approached by $\kappa$-polyhedra of
$\mathcal{A}\left(  \kappa\right)  $, with $\kappa>0$ (but of course tends to
$0$). Then the approximation of such polyhedra by surfaces of $\mathcal{R}%
\left(  \kappa\right)  $ will follow from Lemma \ref{Approximation}.

Suppose first that $A\in\mathcal{A}\left(  0,2\right)  $. By Alexandrov's
realization theorem \cite{al}, $A$ is isometric to a convex surface in
$E\overset{\mathrm{def}}{=}\mathbb{R}^{3}$. This surface can be approached by
convex polyhedra $P\subset E$. Assume that $E$ is embedded in $\mathbb{R}^{4}$
as the affine hyperplane $\{R\} \times\mathbb{R}^{3}$. Let $S$ be the sphere
of radius $R$ centered at $0$, and consider the radial projection $\phi
_{R}:E\rightarrow S$, defined by $\phi_{R}\left(  x\right)  =\frac
{Rx}{\left\Vert x\right\Vert }$, where $\left\Vert x\right\Vert $ denotes the
Euclidean norm of $x$ as a vector in $\mathbb{R}^{4}$. It is clear that
$\phi_{R}$ maps lines of $E$ on great circle of $S$, planes of $E$ on totally
geodesic subspaces of $S$, and consequently $0$-polyhedra of $E$ on $\frac
{1}{R^{2}}$-polyhedra of $S$. Rather obviously too, $\phi_{R}\left(  P\right)
$ tends to $P$ when $R$ tends to infinity. (Indeed, a straightforward
computations shows that $d^{S}\left(  \phi_{R}\left(  a\right)  ,\phi
_{R}\left(  b\right)  \right)  =\left\Vert b-a\right\Vert +o\left(  \frac
{1}{R}\right)  $).

If $A\in\mathcal{A}\left(  0,1\right)  $, then one can apply the same
construction to its universal covering $C$, which is a centrally symmetric
convex surface. Since $\phi_{R}$ obviously preserves this symmetry, $\phi
_{R}\left(  C\right)  $ can be quotiented to obtain an approximation of $A$.
\end{proof}

\medskip

\begin{proof}
[Proof of Theorem \ref{TH2}]For $A\in\mathcal{B}\left(  \kappa\right)  $,
define the following sets:
\[
T\left(  A\right)  \overset{\mathrm{def}}{=}\set(:x\in A|\#F_{x}\geq2:),
\]%
\[
T\left(  A,\varepsilon\right)  \overset{\mathrm{def}}{=}\set(:x\in
A|\mathrm{diam}\left(  F_{x}\right)  \geq\varepsilon:).
\]
We have $T\left(  A\right)  =\bigcup_{k\in\mathbb{N}^{\ast}}T\left(
A,1/k\right)  $ and $T\left(  A,1/k\right)  $ is obviously closed in $A$.
Further define:
\[
\mathcal{M}\overset{\mathrm{def}}{=}\set(:A\in\mathcal{B}\left(
\kappa\right)  |T\left(  A\right)  \text{ is not meager}:)\text{,}%
\]%
\[
\mathcal{M}\left(  \varepsilon\right)  \overset{\mathrm{def}}{=}%
\set(:A\in\mathcal{B}\left(  \kappa\right)  |\mathrm{int}\left(  T \left(
A,\varepsilon\right) \right)  \neq\varnothing:)\text{,}%
\]%
\[
\mathcal{M}\left(  \varepsilon,\eta\right)  \overset{\mathrm{def}}%
{=}\set(:A\in\mathcal{B}\left(  \kappa\right)  |\exists x\in A\;\bar{B}\left(
x,\eta\right)  \subset T\left(  A,\varepsilon\right)  :)\text{,}%
\]
whence
\[
\mathcal{M}=\bigcup_{k\in\mathbb{N}^{\ast}}\mathcal{M}\left(  \frac{1}%
{k}\right)  \subset\bigcup_{\substack{k\in\mathbb{N}^{\ast}\\r\in
\mathbb{N}^{\ast}}}\mathcal{M}\left(  \frac{1}{k},\frac{1}{r}\right)  .
\]

It remains to prove that $\mathcal{M}\left(  \varepsilon,\eta\right)  $ is
nowhere dense in $\mathcal{B}\left(  \kappa\right)  $.

We first show that it is closed. Let $A_{n}$ be a sequence of Alexandrov
surfaces of $\mathcal{M}\left(  \varepsilon,\eta\right)  $ converging to
$A\in\mathcal{B}\left(  \kappa\right)  $. Assume those surfaces embedded in a
same compact metric space (see Lemma \ref{SEL}). Let $x_{n}\in A_{n}$ be such
that $\bar{B}\left(  x_{n},\eta\right)  \subset T\left(  A_{n},\varepsilon
\right)  $. Select a converging subsequence of $\left\{  x_{n}\right\}  $ and
denote by $x$ its limit.

We claim that $\bar{B}\left(  x,\eta\right)  \subset T\left(  A,\varepsilon
\right)  $. Choose $y\in\bar{B}\left(  x,\eta\right)  $, hence $y$ is limit of
$\left\{  y_{n}\right\}  $, with $y_{n}\in\bar{B}\left(  x_{n},\eta\right)
\subset T\left(  A_{n},\varepsilon\right)  $. Consequently, there exist
$u_{n},v_{n}\in F_{y_{n}}\subset A_{n}$ such that $d^{A_{n}}\left(
u_{n},v_{n}\right)  \geq\varepsilon$. By extracting subsequences, one can
assume that $\left\{  y_{n}\right\}  $, $\left\{  u_{n}\right\}  $, and
$\left\{  v_{n}\right\}  $ converge to $y\in\bar{B}\left(  x,\eta\right)  $,
$u\in A$, and $v\in A$. By Lemma \ref{lim_Fx} and the continuity of the metric
function we get $u,v\in F_{y}\subset A$ and $d^{A}\left(  u,v\right)
\geq\varepsilon$. That is, $y\in T\left(  A,\varepsilon\right)  $, the claim
is proved, and $\mathcal{M}\left(  \varepsilon,\eta\right)  $ is closed.

Suppose now that $\mathcal{M}\left(  \varepsilon,\eta\right)  $ has an
interior point $A$. By Lemma \ref{LAR}, there exists a sequence $R_{n}%
\in\mathcal{R}\left(  \kappa_{n}\right)  $ tending to $A$ in $\mathcal{R}%
\left(  \kappa\right)  $ for some sequence of numbers $\kappa_{n}>\kappa$. For
$n$ large enough, $R_{n}$ is also interior to $\mathcal{M}\left(
\varepsilon,\eta\right)  $. Put $\mathcal{R}_{1}=\seti(:R\in\mathcal{R}%
|\text{for most~}x\in R,\#F_{x}=1:)$. By Lemma \ref{LRR}, there exists a
sequence of $R_{n,p}\in\mathcal{R}_{1}$ converging to $R_{n}\in\mathcal{R}%
\left(  \kappa_{n}\right)  $ for the $C^{2}$ topology. Hence for $p$ large
enough, $R_{n,p}\in\mathcal{M}\left(  \varepsilon,\eta\right)  \cap
\mathcal{R}\left(  \kappa\right)  $, which is in contradiction with
$R_{n,p}\in\mathcal{R}_{1}$. Hence $\mathcal{M}\left(  \varepsilon
,\eta\right)  $ has empty interior. This completes the proof of Theorem
\ref{TH2}.
\end{proof}


\section{Two lemmas\label{SLemmas}}

In this section we give two auxiliary results which will be invoked in Section
\ref{segm}; the first one seems to have an interest in its own.

Let $S$ be a closed topological surface. We denote by $\C_{k}(S)$ the space of
continuous metrics $d$ on $S$ such that $(S,d)\in\mathcal{A}\left(
\kappa\right)  $. $\C_{k}(S)$ is naturally endowed with the metric $\delta$
defined by
\[
\delta\left(  d,d^{\prime}\right)  =\max_{x,y\in S}\left\Vert d\left(
x,y\right)  -d^{\prime}\left(  x,y\right)  \right\Vert \text{.}%
\]

\begin{lemma}
\label{Baire_A} The space $\C_{\kappa}(S)$ is Baire.
\end{lemma}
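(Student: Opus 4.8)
The plan is to realize $\C_{\kappa}(S)$ as a $G_{\delta}$ subset of a complete metric space and then invoke the classical theorem that a $G_{\delta}$ subset of a complete metric space is completely metrizable, hence Baire. The ambient complete space will be $\mathcal{L}$, the set of all continuous pseudometrics $d\colon S\times S\to[0,\infty)$, equipped with the same sup-metric $\delta$. Indeed $\mathcal{L}$ is a closed subset of the Banach space $C(S\times S)$, since symmetry, the triangle inequality, non-negativity and $d(x,x)=0$ are each preserved under uniform convergence, and a uniform limit of continuous functions is continuous. I stress that one cannot simply argue that $(\C_{\kappa}(S),\delta)$ itself is complete: for $S$ a torus and $\kappa=0$, a family of flat tori with one systole shrinking to $0$ is $\delta$-Cauchy but converges to a pseudometric that is not a genuine metric (the short circle collapses). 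This is exactly why the limit may leave $\C_{\kappa}(S)$, and why the $G_{\delta}$ formulation, rather than completeness, is the correct target.

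Next I would write $\C_{\kappa}(S)=\mathcal{T}\cap\mathcal{G}$ inside $\mathcal{L}$, where $\mathcal{T}$ encodes the curvature bound and $\mathcal{G}$ the requirement of being a genuine length metric inducing the topology of $S$. Toponogov's comparison can be phrased using only the mutual distances of finite configurations of points, through the comparison angles in $\mathbb{M}_{\kappa}$ (for example, the condition that for every point and every triple of points the three comparison angles at that point sum to at most $2\pi$); as these are non-strict inequalities depending continuously on finitely many distances, they survive uniform limits, so $\mathcal{T}$ is closed in $\mathcal{L}$. For $\mathcal{G}$, I would fix an auxiliary metric $d_{0}$ inducing the topology of $S$ and set $U_{m}=\{d\in\mathcal{L}:\inf\{d(x,y):d_{0}(x,y)\ge 1/m\}>0\}$. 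A pseudometric lies in every $U_{m}$ precisely when $d(x,y)=0$ forces $x=y$, i.e. when $d$ is a genuine metric, and by continuity and compactness such a $d$ then induces the topology of $S$. Each $U_{m}$ is open, because if the displayed infimum equals $c>0$ for $d$ and $\delta(d,d')<c/2$, then $d'\ge c/2$ on the compact set $\{d_{0}(x,y)\ge 1/m\}$. To capture the length-space property I would use the approximate-midpoint sets $L_{\varepsilon}=\{d\in\mathcal{L}:\forall x,y\ \exists z,\ \max(d(x,z),d(z,y))\le\tfrac12 d(x,y)+\varepsilon\}$; a small $\delta$-perturbation of a metric with $\varepsilon/2$-approximate midpoints still has $\varepsilon$-approximate midpoints, so the set of length metrics $\bigcap_{m}L_{1/m}$ is $G_{\delta}$. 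Intersecting, $\mathcal{G}=\bigl(\bigcap_{m}U_{m}\bigr)\cap\bigl(\bigcap_{m}L_{1/m}\bigr)$ is $G_{\delta}$, whence $\C_{\kappa}(S)=\mathcal{T}\cap\mathcal{G}$ is $G_{\delta}$ in $\mathcal{L}$.

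It then remains to confirm that $\mathcal{T}\cap\mathcal{G}$ is exactly $\C_{\kappa}(S)$. If $d\in\mathcal{G}$ then $(S,d)$ is a compact geodesic space carrying the topology of $S$, hence a $2$-manifold without boundary (approximate midpoints give genuine midpoints, and midpoints on a compact metric space yield geodesics); together with the four-point condition defining $\mathcal{T}$ this upgrades to the genuine Alexandrov condition $\mathrm{curv}\ge\kappa$, so $(S,d)\in\mathcal{A}(\kappa)$, and the reverse inclusion is immediate. Applying the $G_{\delta}$ theorem to $\C_{\kappa}(S)\subset\mathcal{L}$ then yields complete metrizability and hence the Baire property. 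The step I expect to require the most care is precisely the recognition that positive-definiteness and the length-space property are the only obstructions to closedness — the flat-torus collapse shows $\C_{\kappa}(S)$ is genuinely not closed — and their verification as $G_{\delta}$ conditions, together with checking that the purely metric (comparison-angle) form of Toponogov's inequality is both closed in $\mathcal{L}$ and, on length metrics, equivalent to the curvature bound.
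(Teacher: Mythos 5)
Your proposal is correct, and it reaches the Baire property by a genuinely different mechanism than the paper's, although both arguments live in the same ambient space. The paper also works inside $C(S\times S)$ with the sup-metric: it packages the pseudometric axioms, \emph{exact} midpoints, and a point-on-side comparison inequality (encoded by the function $c_{\kappa}$) into a single closed set $\mathfrak{F}_{\kappa}^{\prime}(S)$ --- exact midpoints do pass to uniform limits by compactness of $S$ (a limit point of $d_{n}$-midpoints is a $d$-midpoint), so the paper needs no analogue of your approximate-midpoint perturbation step $L_{\varepsilon/2}\subset\mathrm{int}\,L_{\varepsilon}$ --- and then, exactly as you do, isolates positive-definiteness as the only non-closed requirement, covering its failure by the closed sets $\mathfrak{H}_{n}=\left\{d\ :\ \exists x,y,\ d_{0}(x,y)\geq 1/n,\ d(x,y)=0\right\}$, which are precisely the complements of your open sets $U_{n}$ (the infimum in your definition is attained, by compactness and continuity of $d$). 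The endgames differ: the paper concludes that $\C_{\kappa}(S)$ is residual in its closure, a complete space, and uses that a comeager subspace of a Baire space is Baire; you exhibit $\C_{\kappa}(S)$ as a $G_{\delta}$ in the complete space of continuous pseudometrics and invoke the Mazurkiewicz--Alexandroff theorem. Your route buys a strictly stronger conclusion --- $\C_{\kappa}(S)$ is completely metrizable, indeed Polish since $C(S\times S)$ is separable --- and your flat-torus collapse correctly identifies why mere completeness of $\C_{\kappa}(S)$ is unavailable, which is also implicit in the paper's choice of strategy. Two soft spots remain, but both are present at the same level of rigor in the paper's own proof, so they do not count against you: first, the recognition step that the purely distance-theoretic conditions characterize membership in $\mathcal{A}(\kappa)$, where you at least name the correct tool (the $(1{+}3)$-point comparison for complete length spaces) while the paper only says ``notice that''; second, for $\kappa>0$ the comparison angles (and likewise the paper's $c_{\kappa}$) are undefined when the model configurations fail to exist in $\mathbb{M}_{\kappa}$, so the comparison condition must be read as ``the inequality holds whenever all comparison quantities are defined'' and its closedness under uniform limits checked with that convention --- standard, but worth a sentence in a final write-up.
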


\begin{proof}
Consider in $\mathbb{M}_{\kappa}$ a triangle $xyz$ such that $d\left(
x,y\right)  =a$, $d\left(  x,z\right)  =b$, and $d\left(  y,z\right)  =c$. We
denotes by $\Theta_{\kappa}\left(  a;b,c\right)  $ the angle $\measuredangle
xzy$. Moreover, we set $\Theta_{\kappa}\left(  a;b,c\right)  =0$ whenever
$bc=0$ or the triple $\left(  a,b,c\right)  $ does not satisfy one of the
three triangle inequalities. Hence $\Theta_{\kappa}$ is defined on
$\mathbb{R}_{+}^{3}$ and is lower semi-continuous.

Since $S$ is compact, the set $\mathfrak{F}\left(  S\right)  $ of all
non-negative valued continuous functions on $S\times S$ is complete. Denote by
$\mathfrak{F}_{\kappa}^{\prime}\left(  S\right)  $ the set of those functions
$f\in\mathfrak{F}\left(  S\right)  $ such that%

\begin{align}
\forall x,y\in S  &  \,\,f\left(  x,y\right)  =f\left(  y,x\right)
\label{101}\\
\forall x\in S  &  \,\,f\left(  x,x\right)  =0\label{102}\\
\forall x,y,z\in S  &  \,\,f\left(  x,y\right)  \leq f\left(  x,z\right)
+f\left(  z,x\right) \label{103}\\
\forall x,y\in S  &  \,\,\exists z\in A~f\left(  x,z\right)  =f\left(
z,y\right)  =\frac{1}{2}f\left(  x,y\right) \label{104}\\
\forall x,y,z,p\in S  &  \,\,x\neq p~\text{and}~y\neq p~\text{and}~z\neq
p\Longrightarrow\label{105}\\
&  \Theta_{k}\left(  f\left(  x,y\right)  ;f\left(  p,x\right)  ,f\left(
p,y\right)  \right) \nonumber\\
+  &  \Theta_{k}\left(  f\left(  y,z\right)  ;f\left(  p,y\right)  ,f\left(
p,z\right)  \right) \nonumber\\
+  &  \Theta_{k}\left(  f\left(  z,x\right)  ;f\left(  p,z\right)  ,f\left(
p,x\right)  \right)  \leq\pi\text{.} \nonumber
\end{align}

The set $\mathfrak{F}_{\kappa}^{\prime}\left(  S\right)  $ is obviously closed
in $\mathfrak{F}\left(  S\right)  $ and consequently complete. Now, notice
that (\ref{101}), (\ref{102}), (\ref{103}) are standard axioms of metrics,
(\ref{104}) is well-known to imply that the metric is intrinsic, and
(\ref{105}) is the so-called 4 points property in \cite{BGP}, which is one of
the alternative definitions of Alexandrov spaces with curvature bounded below
by $\kappa$. It follows that a function $f\in\mathfrak{F}_{\kappa}^{\prime
}\left(  S\right)  $ belongs to $\C_{\kappa}\left(  S\right)  $ if and only if
it satisfies furthermore $f\left(  x,y\right)  =0\Rightarrow x=y$. In other
words
\[
\mathfrak{F}_{\kappa}^{\prime}\left(  S\right)  \setminus\C_{\kappa}\left(
S\right)  \subset\bigcup_{n\in\mathbb{N}}\mathfrak{H}_{n}\text{,}%
\]
where
\[
\mathfrak{H}_{n}=\set(:f\in\mathfrak{F}_{\kappa}^{\prime}\left(  S\right)
|\exists x,y\in S~\text{s.t.}~d_{0}\left(  x,y\right)  \geq\frac{1}%
{n}~\text{and}~f\left(  x,y\right)  =0:)
\]
and $d_{0}$ is any fixed distance on $S$. The sets $\mathfrak{H}_{n}$ are
closed in $\mathfrak{F}_{\kappa}^{\prime}\left(  S\right)  $, hence
$\mathrm{cl}\left(  \C_{\kappa}\left(  S\right)  \right)  \setminus\C_{\kappa
}\left(  S\right)  $ is included in a countable union of closed sets in
$\mathrm{cl}\left(  \C_{\kappa}\left(  S\right)  \right)  $. On the other
hand, the interior of $\mathrm{cl}\left(  \C_{\kappa}\left(  S\right)
\right)  \setminus\C_{\kappa}\left(  S\right)  $ is obviously empty, whence
$\C_{\kappa}\left(  S\right)  $ in residual in its closure, which is complete.
Therefore it is a Baire space.
\end{proof}

\medskip

The next elementary result is a small step in the proof of Theorem
\ref{segments}.

\begin{lemma}
\label{01} Let $\kappa$ be $0$ or $1$. Consider in $\mathbb{M}_{\kappa}$ a
non-convex quadrilateral $x_{0}x_{1}zx_{2}$ depending on two parameters $l$
and $\varepsilon$, such that the respective midpoints $y_{1}$ and $y_{2}$ of
$x_{0}x_{1}$ and $x_{0}x_{2}$ satisfy $x_{0}x_{1}=x_{0}x_{2}=2l$,
$x_{1}z=x_{2}z=l$, $y_{1}y_{2}=\varepsilon$, see Figure \ref{Fig2}. In the
case $\kappa=1$, assume moreover that $l<\frac{\pi}{2}$. Then, for
$\varepsilon$ small enough, the circumcenter of the triangle $x_{0}x_{1}x_{2}$
lies inside the triangle $y_{1}y_{2}z$.
\end{lemma}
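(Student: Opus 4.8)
The plan is to use the mirror symmetry of the configuration to reduce the two–dimensional claim to a one–dimensional comparison of three points lying on the axis of symmetry. Let $\Delta$ be the perpendicular bisector of $x_1x_2$ in $\mathbb{M}_{\kappa}$ and let $\sigma$ be the reflection across $\Delta$. Since $x_0x_1=x_0x_2$ and $zx_1=zx_2$, the isometry $\sigma$ fixes $x_0$ and $z$ and interchanges $x_1$ and $x_2$; because $y_i$ is the midpoint of $x_0x_i$, it also interchanges $y_1$ and $y_2$. Hence $\Delta$ is at once the perpendicular bisector of $y_1y_2$ and the bisector of the angles at $x_0$ and at $z$, and it passes through $x_0$, $z$, the midpoint $w$ of $x_1x_2$, the midpoint $m$ of $y_1y_2$, and the circumcenter $O$ (which, being equidistant from $x_1$ and $x_2$, must lie on $\Delta$).

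I would first record the reduction. The triangle $y_1y_2z$ is isosceles with apex $z$ and symmetric about $\Delta$, so the geodesic $\Delta$ cuts it exactly along the median from $z$ to the base midpoint $m$. Therefore a point of $\Delta$ is interior to $y_1y_2z$ if and only if it lies strictly between $m$ and $z$; as $m$, $O$, $z$ all lie on the geodesic ray issued from $x_0$ into the configuration, this amounts to the two strict inequalities $x_0m<x_0O<x_0z$ (arclengths along $\Delta$). It thus suffices to establish these for $\varepsilon$ small.

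The essential point is the left inequality $x_0m<x_0O$, which is delicate precisely because $m$ and $O$ coalesce as $\varepsilon\to0$ (both tending to a common limit on $\Delta$), so no soft continuity argument can decide its sign. I would settle it by locating both $m$ and $O$ from the single leg $x_0y_1=l$. Write $A=\angle x_1x_0x_2$, so that $\Delta$ makes angle $A/2$ with $x_0x_1$. In the triangle $x_0y_1m$ the angle at $m$ is right (since $y_1m\perp\Delta$), hence $x_0y_1$ is its hypotenuse and $x_0m$ the leg adjacent to $A/2$; in the triangle $x_0y_1O$ the angle at $y_1$ is right (since $O$ lies on the perpendicular bisector of $x_0x_1$, which meets $x_0x_1$ orthogonally at $y_1$), hence now $x_0y_1$ is the leg adjacent to $A/2$ and $x_0O$ the hypotenuse. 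Applying the right–triangle relation $\tan(\text{adjacent leg})=\tan(\text{hypotenuse})\cos(\text{angle})$ for $\kappa=1$, and its Euclidean analogue (with $\tan$ replaced by the identity) for $\kappa=0$, gives $\tan(x_0m)=\tan l\cos(A/2)$ and $\tan(x_0O)=\tan l/\cos(A/2)$. Since $0<\cos(A/2)<1$ and $l<\pi/2$, monotonicity of $\tan$ on $(0,\pi/2)$ yields $x_0m<l<x_0O$; in particular $x_0m<x_0O$, and this holds for every admissible $\varepsilon$, not only small ones.

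Finally, the right inequality $x_0O<x_0z$ is the easy direction, and it is here that the smallness of $\varepsilon$ is used. As $\varepsilon\to0$ the triangle $x_0x_1x_2$ degenerates onto $\Delta$, so $A\to0$ and $x_0O\to l$, whereas $x_0z=x_0w+wz\to 3l$ (the limits $x_0w\to2l$ and $wz\to l$ following from the same right–triangle relations applied to $x_0wx_1$ and $wzx_1$). As $l<3l$, the strict inequality $x_0O<x_0z$, and with it the required betweenness, persists for all sufficiently small $\varepsilon$. The main obstacle is thus not any computation but the correct synthetic set–up of the two right triangles sharing the leg $x_0y_1$, together with the verification that $\Delta$ meets $y_1y_2z$ in the median $mz$; once these are in place the decisive inequality $x_0m<l<x_0O$ is immediate.
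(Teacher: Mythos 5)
There is a genuine gap, and it lies in the identification of the configuration. You have placed $z$ on the far side of $x_{1}x_{2}$ from $x_{0}$, so that the quadrilateral is a convex kite and $x_{0}z=x_{0}w+wz\rightarrow 3l$. In the paper's Figure \ref{Fig2} the quadrilateral $x_{0}x_{1}zx_{2}$ is a \emph{nonconvex} dart: $z$ lies on the \emph{same} side of $x_{1}x_{2}$ as $x_{0}$, the sides $x_{1}z=x_{2}z=l$ folding back toward $x_{0}$, so that along the axis $x_{0}z=x_{0}w-wz\rightarrow 2l-l=l$. This reading is forced by the application in Lemma \ref{Fix_A}: the quadrilateral is inserted at a conical point, and the total angle it contributes, $2(\alpha+\beta)$ with $\beta=\angle x_{0}x_{1}z$, must fit below the singular curvature $\omega_{x}$; with your kite the angles at $x_{1}$ and $x_{2}$ would be close to $\pi$ and the insertion would be impossible. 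It is also visible in the paper's own computation, where $\beta=\phi-\psi$ (the \emph{difference} of the two base angles at $x_{1}$), which is the angle $\angle x_{0}x_{1}z$ only when $z$ lies inside the angle $\angle x_{0}x_{1}x_{2}$.

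The consequence is that your ``easy direction'' $x_{0}O<x_{0}z$ is precisely the content of the lemma, and your argument for it fails: in the true configuration $x_{0}O\rightarrow l$ \emph{and} $x_{0}z\rightarrow l$, so all three points $m$, $O$, $z$ coalesce as $\varepsilon\rightarrow 0$ and no limit comparison can decide the sign --- exactly the trap you correctly flagged for the other inequality. (Quantitatively, $x_{0}z-x_{0}O$ is of order $\varepsilon^{2}$, and the corresponding angular gap is of order $\varepsilon^{3}$; this is why the paper must expand to third order, obtaining $\beta-\alpha=\varepsilon^{3}/(8\tan^{2}l\sin l)+o(\varepsilon^{4})$ for $\kappa=1$.) The paper's route is: $O$ precedes $z$ on the axis if and only if $z$ is on the far side of the perpendicular bisector of $x_{0}x_{1}$, i.e.\ $d(x_{0},z)>d(x_{1},z)=l$, which by the side--angle comparison in triangle $x_{0}x_{1}z$ (valid in $\mathbb{M}_{0}$ and $\mathbb{M}_{1}$, the segment $x_{0}z$ running along the axis so that $\angle zx_{0}x_{1}=\alpha$) is equivalent to $\beta>\alpha$; this is then checked exactly for $\kappa=0$ and by the expansion above for $\kappa=1$. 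Your exact relations $\tan(x_{0}m)=\tan l\cos(A/2)$ and $\tan(x_{0}O)=\tan l/\cos(A/2)$, giving $x_{0}m<l<x_{0}O$ for every $\varepsilon$, are correct and do survive in the true configuration --- they settle cleanly the $m$-side inequality, which the paper leaves tacit --- but as written your proposal proves a different, much weaker statement and leaves the decisive inequality unproved.
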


\begin{proof}
Let $\alpha$, $\beta$, $\theta$, $\phi$ and $\psi$ be the angles defined by
Figure \ref{Fig2}; we have to prove that $\beta>\alpha$. \begin{figure}[ptb]
\centering\includegraphics[width=0.7\textwidth]{\pth 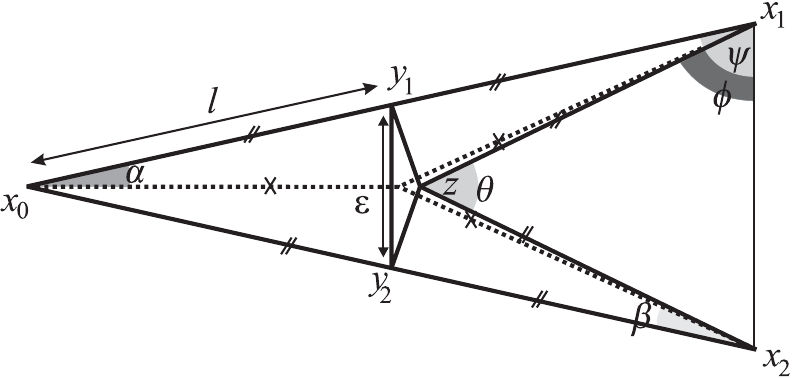} \caption{A
configuration of isosceles triangles in $\mathbb{M}_{\kappa}$.}%
\label{Fig2}%
\end{figure}

In the case $\kappa=0$, we may assume without loss of generality that $l=1$,
hence $\alpha=\arcsin\frac{\varepsilon}{2}$ and $\theta=2\arcsin\varepsilon$.
It follows that
\[
\beta-\alpha=\phi-\psi-\alpha=\frac{\pi}{2}-\alpha-\frac{\pi-\theta}{2}%
-\alpha=\frac{\theta}{2}-2\alpha>0\text{.}%
\]

Now we assume $\kappa=1$. A cosine law in the triangle $x_{0}x_{1}x_{2}$
gives
\[
\cos\varepsilon=\cos^{2}l+\sin^{2}l\cos2\alpha\text{,}%
\]
whence%
\[
\alpha=\frac{1}{2\sin l}\varepsilon\left(  1+\frac{1}{24\tan^{2}l}%
\varepsilon^{2}\right)  +o\left(  \varepsilon^{4}\right)  \text{.}%
\]

Let $a$ be the distance between $x_{1}$ and $x_{2}$. In the triangle
$x_{0}x_{1}x_{2}$, a cosine law gives%
\[
\cos a=\cos^{2}2l+\sin^{2}2l\cos2\alpha\text{,}%
\]
whence%
\[
a=2\cos l\varepsilon+\frac{2\cos l+\cos3l}{12}\varepsilon^{3}+o\left(
\varepsilon^{4}\right)  \text{.}%
\]
A second cosine law in the same triangle gives%
\[
\cos2l=\cos2l\cos a+\sin2l\sin a\cos\phi\text{,}%
\]
whence%
\[
\phi=\frac{\pi}{2}-\frac{\cos l}{\tan2l}\varepsilon-\frac{\cos^{2}l\cos
2l}{48\sin^{3}l}\left(  1+8\sin^{2}l\right)  \varepsilon^{3}+o\left(
\varepsilon^{4}\right)  \text{.}%
\]
In the triangle $x_{1}x_{2}z$ we have%
\[
\cos l=\cos l\cos a+\sin l\sin a\cos\psi\text{,}%
\]
therefore%
\[
\psi=\frac{\pi}{2}-\frac{\cos l}{\tan l}\varepsilon-\frac{\cos^{2}l}%
{24\sin^{3}l}\left(  2\sin^{2}2l+5\cos^{2}l-1\right)  \varepsilon^{3}+o\left(
\varepsilon^{4}\right)  \text{.}%
\]
Finally,%
\begin{align*}
\beta-\alpha &  =\phi-\psi-\alpha\\
&  =\frac{\cos^{2}l}{8\sin^{3}l}\varepsilon^{3}+o\left(  \varepsilon
^{4}\right)  >0\text{.}%
\end{align*}

\end{proof}


\section{Typical number of segments\label{SNS}}

\label{segm}

In this section we generalize the last part of Theorem 2 in \cite{Z2}, from
convex surfaces to Alexandrov surfaces. Results in the same direction were
obtained by the first author \cite{JR3}, for a manifold endowed with a typical
Riemannian metric (see Lemma \ref{LRR}), and by P. Horja \cite{h} and T.
Zamfirescu \cite{z-nr}, for upper curvature bounds.

\begin{theorem}
\label{segments} For most $A\in\mathcal{A}\left(  \kappa\right)  $, most $x\in
A$ and are joined to each of their farthest point by exactly $3$ segments.
\end{theorem}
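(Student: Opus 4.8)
The plan is to combine the two regimes established by the previous results: the flat case is already handled by Lemma \ref{LFT}, so attention focuses on $\mathcal{B}(\kappa)$, where we know from Theorem \ref{TH2} that most $x$ have a \emph{unique} farthest point. Reasoning as in the proof of Theorem \ref{TH2}, I would reduce the claim to showing that a suitable ``bad set'' is nowhere dense. Specifically, let $N(A)$ denote the set of $x\in A$ whose unique farthest point $y$ is joined to $x$ by a number of segments different from $3$, and define filtration sets $\mathcal{N}(\varepsilon,\eta)$ analogous to $\mathcal{M}(\varepsilon,\eta)$, consisting of those $A$ admitting a ball $\bar B(x,\eta)$ entirely contained in a closed ``bad'' sublevel set. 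The goal is to prove each such $\mathcal{N}(\varepsilon,\eta)$ is nowhere dense in $\mathcal{B}(\kappa)$.

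For closedness, I would mimic the stability argument of Theorem \ref{TH2}: take $A_n\to A$ isometrically embedded in a common $Z$, use Lemma \ref{lim_Fx} to control limits of farthest points, and invoke the convergence theorems for segments (mentioned in Section \ref{SAS}) to show that the ``at least $4$ segments'' condition is preserved in the limit. The emptiness of the interior is where the auxiliary machinery enters. Here I would again approximate an interior point $A$ by Riemannian surfaces $R_n\in\mathcal{R}(\kappa_n)$ with $\kappa_n>\kappa$ via Lemma \ref{LAR}, and then apply the Riemannian result Lemma \ref{LRR}, which guarantees that for most $g$ and most $x$ the unique farthest point is joined by \emph{at most three} segments. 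This produces surfaces inside $\mathcal{N}(\varepsilon,\eta)$ for which most points have at most three segments, contradicting containment of an entire ball in the bad set.

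The subtlety is that Lemma \ref{LRR} only bounds the number of segments \emph{above} by three, so the above argument disposes of the ``$\ge 4$ segments'' part but not of the ``$\le 2$ segments'' part. To rule out fewer than three segments I would argue geometrically: if $x$ has a unique farthest point $y$ joined by only one or two segments, one can perturb $x$ slightly and show, using Lemma \ref{01} together with the first variation formula, that the perturbed point acquires either a second farthest point or a strictly larger radius, so such configurations are non-generic. Concretely, with two segments of equal length $d(x,y)$ landing at $y$, Lemma \ref{01} locates the circumcenter of the comparison triangle strictly inside the triangle formed by the segment midpoints and $y$; moving $x$ toward this circumcenter increases the distance to $y$, so $y$ cannot have been farthest unless a third segment (or a competing farthest point) intervenes. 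Patching this local increase argument into the Baire framework shows the two-segment locus is contained in a nowhere-dense set.

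The main obstacle I anticipate is this last step: transferring the constant-curvature computation of Lemma \ref{01} to the genuinely Alexandrov (non-smooth, variable-curvature) setting, where the first variation formula and Toponogov comparison must be applied carefully at possibly conical points. This is presumably why the authors introduce the auxiliary space $\C_\kappa(S)$ of Alexandrov metrics and prove it is Baire (Lemma \ref{Baire_A}): rather than perturbing within $\mathcal{B}(\kappa)$ directly, one can work in $\C_\kappa(S)$, where perturbations of the metric are easier to control and the density of smooth metrics lets one import Lemma \ref{LRR}. I would therefore expect the cleanest route to recast the genericity statement as a statement about most $d\in\C_\kappa(S)$, establish the ``exactly three segments'' property there by combining the local geometric lemma with the Riemannian approximation, and finally transport the conclusion back to $\mathcal{B}(\kappa)$ using Perel'man's stability theorem (Lemma \ref{LP}) to match the abstract metric space with an actual surface of the prescribed topology.
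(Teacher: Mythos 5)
Your overall architecture coincides with the paper's: the flat components are dispatched by Lemma \ref{LFT}, and the remaining claim splits into an upper bound (the paper's Lemma \ref{T2}: at most three segments) and a lower bound (Lemma \ref{T3}: at least three). Your treatment of the upper bound is exactly the paper's proof of Lemma \ref{T2} --- the closed sets $Q(A,\varepsilon)$ and $\mathcal{N}(\varepsilon,\eta)$, closedness via Lemma \ref{SEL}, Lemma \ref{lim_Fx} and convergence of segments, empty interior via Lemma \ref{LAR} and Lemma \ref{LRR} --- and your guess that the lower bound is proved in the auxiliary Baire space $\C_{\kappa}(S)$ (Lemma \ref{Baire_A}) and transported back via Perel'man's stability theorem is also correct in outline.

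However, your geometric mechanism for excluding one or two segments has a genuine gap. The circumcenter produced by Lemma \ref{01} is a point of a comparison configuration in $\mathbb{M}_{\kappa}$, not of the surface, so ``moving $x$ toward the circumcenter'' is undefined on $A$; and in any case a perturbation of the point $x$ inside a \emph{fixed} surface cannot establish genericity over metrics --- the hypothesis to be contradicted is precisely that a whole ball of points admits the bad configuration, so pointwise perturbation leads nowhere. What the paper does instead is surgery on the metric. It isolates the obstruction as a \emph{long loop} (two segments from $y\in F_{x}$ to $x$ whose directions cut $\Sigma_{y}$ into arcs of length at most $\pi$; criticality of a farthest point makes at most two segments force such a loop), then cuts the surface along the loop and glues in the quadrilateral $L$ of Lemma \ref{01}. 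The circumcenter thereby \emph{becomes} an actual point $y_{0}$ of the new surface: it is the unique farthest point from $x$, smooth, and joined to $x$ by three segments, so the long loop is destroyed while the metric moves only slightly --- the opposite use of Lemma \ref{01} from the one you propose.

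Two further ingredients, absent from your sketch, are indispensable. First, Alexandrov's gluing theorem (Lemma \ref{gluing}) permits the insertion only if the total angle glued at $x$ and at $y$ stays at most $2\pi$; since a typical metric has \emph{no} conical points, one must first manufacture singular curvature at $x$ and $y$ (enough that $2(\alpha+\beta)\leq\omega_{x}$) by the cut-and-reglue procedure of Lemma \ref{y_conical}, which preserves $F_{x}=\{y\}$ --- without this step the perturbed metric may leave $\mathcal{A}(\kappa)$. Second, for $\kappa=1$ the quadrilateral construction requires $d(x,y)<\pi/2$; the subcase $d(x,y)>\pi/2$ needs an entirely different construction (inserting two spherical slices, equalizing segment lengths by an intermediate-value argument, and invoking the first variation formula together with Lemma \ref{F_1} to conclude $F_{x}=M_{x}=\{y\}$). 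Without this, the argument does not cover $\mathcal{A}(1)$ at all. Finally, note that Lemma \ref{LRR} plays no role in the lower bound: the proof of Lemma \ref{Fix_A} uses the already-established typical properties (Theorem \ref{TH2}, Lemma \ref{T2}, Lemma \ref{EndPoints}) transported to $\C_{\kappa}(S)$, not Riemannian approximation.
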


The proof of our last theorem consists of Lemmas \ref{LFT}, \ref{T2} and
\ref{T3}.

\medskip

The next lemma follows from an explicit computation of cut loci of flat
surfaces \cite{RV_flat}.

\begin{lemma}
\label{LFT} On most $A\in\mathcal{A}(0,0)$, most points are joined to any of
their farthest point by exactly three segments.
\end{lemma}

\begin{lemma}
\label{T2}For most $A\in\mathcal{B}\left(  \kappa\right)  $, for most $x\in
A$, $x$ is joined to each of its farthest points by at most three segments.
\end{lemma}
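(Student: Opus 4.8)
The plan is to keep the overall architecture of the proof of Theorem~\ref{TH2}, while replacing its ``$\#F_x\geq 2$ is a closed condition'' mechanism, which has no analogue here because the \emph{number of segments} is neither upper nor lower semicontinuous under Gromov--Hausdorff limits (segments may merge, or appear in the limit). To regain control I would pass to the auxiliary Baire space $\C_\kappa(S)$ of Lemma~\ref{Baire_A}, after fixing the topological type $S$: its decisive feature is that the underlying point set $S$ is fixed, so a given $x$ and the segments issuing from it can be tracked as the metric varies, and convergence for $\delta$ is uniform convergence of the distance function. Since the target assertion has the nested form ``for most $A$, for most $x$'', I would prove it as a single category statement on the product $\C_\kappa(S)\times S$ and then invoke the Kuratowski--Ulam theorem (the Baire analogue of Fubini): it suffices to show that
\[
B=\set(:(d,x)\in\C_\kappa(S)\times S|x\text{ is joined to a farthest point by at least }4\text{ segments}:)
\]
is meager. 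By Theorem~\ref{TH2} I may also restrict to pairs whose farthest point is unique.

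To see that $B$ is meager I would stratify it as $B\subseteq\bigcup_{m\in\mathbb{N}^\ast}B_m$, where $B_m$ gathers the pairs $(d,x)$ possessing four segments to the farthest point $y$ whose directions in $\Sigma_x$ are pairwise $1/m$-separated and with $d(x,y)\geq 1/m$; this separation modulus is exactly what prevents the troublesome merging of segments in a limit, and with it each $B_m$ is closed (using the uniform convergence in $\C_\kappa(S)$, Lemma~\ref{lim_Fx} for the farthest point, and the convergence of segments to segments). It then remains to show that $B_m$ has empty interior. Given $(d,x)\in B_m$, I would develop two or three of the competing segments into $\mathbb{M}_\kappa$ and then modify $d$, by a small localized gluing surgery that stays in $\C_\kappa(S)$ thanks to Lemma~\ref{gluing}, so as to strictly lengthen one competitor relative to the others and thereby delete it as a shortest connection. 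Lemma~\ref{01} provides the planar model that certifies the sign of this change: the circumcenter of the developed triangle lies strictly inside the inner triangle $y_1y_2z$, which is precisely the strict inequality between the competing half-lengths needed to discard the fourth segment while three survive. Strict inequalities persist on a neighbourhood of the perturbed pair, so $B_m$ is nowhere dense.

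Granting that $B=\bigcup_m B_m$ is meager, Kuratowski--Ulam yields that for most $d\in\C_\kappa(S)$ the section $\{x:(d,x)\in B\}$ is meager in $S$, that is, most $x$ are joined to each of their farthest points by at most three segments. I would then transfer this residual set along the projection $d\mapsto(S,d)$ onto the component of $\mathcal{B}(\kappa)$ of type $S$; this map is continuous because $d_{GH}\leq\delta$, it has dense image by Lemma~\ref{Approximation}, and Perel'man stability (Lemma~\ref{LP}) matches metrics close in $\C_\kappa(S)$ with surfaces close in $\mathcal{A}(\kappa)$, so that residuality is preserved.

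I expect the perturbation step to be the main obstacle. In contrast with the Riemannian setting of Lemma~\ref{LRR}, the metric cannot be varied freely: the modification must preserve the lower curvature bound $\kappa$ (hence be realized by gluing or polyhedral surgery) and yet \emph{provably} decrease the number of shortest connections to the farthest point. Reconciling these two demands is exactly what Lemma~\ref{01} is for; the delicate point is to localize the surgery sharply enough that it affects only the chosen competitor, without creating a new farthest point or a new short segment elsewhere, and it is the separation modulus $1/m$ together with the strictness of the circumcenter inequality that make this controllable.
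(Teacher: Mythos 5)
There is a genuine gap, and it sits exactly at the step you yourself flag as the main obstacle: the empty-interior (perturbation) argument. The paper's proof of Lemma \ref{T2} does not perform any surgery at all for this bound. It defines $Q(A,\varepsilon)$ with four segments separated in \emph{Hausdorff distance} $d_H^A(\sigma^i,\sigma^j)\ge\varepsilon$, shows $\mathcal{N}(\varepsilon,\eta)$ is closed in $\mathcal{B}(\kappa)$ via the common embedding of Lemma \ref{SEL} and Lemma \ref{lim_Fx}, and then kills its interior by the same mechanism as in Theorem \ref{TH2}: approximate an alleged interior point by Riemannian surfaces $R_n\in\mathcal{R}(\kappa_n)$ with $\kappa_n>\kappa$ (Lemma \ref{LAR}), then approximate $R_n$ in the $C^2$ topology by metrics that are typical in the sense of Lemma \ref{LRR}, for which most points have a unique farthest point joined by at most three segments; the strict inequality $\kappa_n>\kappa$ is what guarantees the $C^2$-close typical metrics still have curvature $\ge\kappa$. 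You never invoke Lemmas \ref{LAR} and \ref{LRR}, and your substitute does not work: Lemma \ref{01} is designed for the \emph{opposite} bound (in Lemma \ref{Fix_A} the quadrilateral $L$ is inserted along a long loop to \emph{create} a farthest point reached by three segments), and it contains no mechanism for deleting one of four well-separated segments. Moreover, a ``localized gluing surgery that strictly lengthens one competitor'' is precisely what the class $\mathcal{A}(\kappa)$ forbids: the cut-and-paste constructions available under Lemma \ref{gluing} are distance non-increasing (compare $d^{\prime\prime}(f(p),f(q))\le d^{\prime}(p,q)$ in the proof of Lemma \ref{y_conical}), so without a concrete model replacing Lemma \ref{LRR} the nowhere-density of your $B_m$ is unproved.

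Two further steps of your architecture also fail as stated. First, the transfer from $\C_\kappa(S)$ back to $\mathcal{B}(\kappa)$: a continuous map with dense image preserves neither meagerness nor residuality (the projection of $\mathbb{R}^2$ onto $\mathbb{R}$ maps the meager set $\mathbb{R}\times\{0\}$ onto everything), and Perel'man stability does not repair this by itself. To conclude in $\mathcal{B}(\kappa)$ you would still need the bad set there to be a countable union of sets that are closed and have empty interior \emph{in the Gromov--Hausdorff topology} --- which is exactly the Lemma \ref{SEL} argument the paper carries out and which your closedness of $B_m$ in the product $\C_\kappa(S)\times S$ does not replace. (The paper does use $\C_\kappa(S)$, but only for Lemma \ref{T3}, and there the transfer goes in the safe direction: an open set of bad surfaces pulls back, through the continuous map $d\mapsto(S,d)$, to an open set of bad metrics, contradicting Lemma \ref{Fix_A}.) Second, your separation modulus is the wrong one: with only a lower curvature bound, Toponogov comparison bounds the divergence of two segments from above in terms of their angle at $x$, so pairwise $1/m$-separated directions in $\Sigma_x$ do not yield a uniform Hausdorff separation, and the closedness of your $B_m$ under limits of metrics is doubtful (angles may collapse in the limit); the paper's choice $d_H^A(\sigma^i,\sigma^j)\ge\varepsilon$ makes closedness immediate since converging segments converge to segments. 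The Kuratowski--Ulam reduction itself is legitimate but unnecessary: the paper handles the nested ``most--most'' quantifiers directly with the condition $\bar B(x,\eta)\subset Q(A,\varepsilon)$.
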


\begin{proof}
For $x,y\in A$, recall that $\mathfrak{S}_{xy}$ denotes the set of segments
from $x$ to $y$. For $A\in\mathcal{B}\left(  \kappa\right)  $ and
$\varepsilon>0$, define
\[
Q\left(  A\right)  \overset{\mathrm{def}}{=}\set(:x\in A|\exists y\in
F_{x}~\text{with}~\#\mathfrak{S}_{xy}\geq4:)\text{,}%
\]%
\[
Q\left(  A,\varepsilon\right)  \overset{\mathrm{def}}{=}\set(:x\in A|%
\begin{array}
[c]{l}%
\exists y\in F_{x}\;\exists\sigma^{1},\ldots,\sigma^{4}\in\Sigma_{xy}\\
\forall1\leq i<j\leq4\;d_{H}^{A}\left(  \sigma^{i},\sigma^{j}\right)
\geq\varepsilon
\end{array}
:)\text{.}%
\]
Note that $Q\left(  A,\varepsilon\right)  $ is closed, and $Q\left(  A\right)
=\bigcup_{p\in\mathbb{N}^{\ast}}Q\left(  A,1/p\right)  $. Further define
\[
\mathcal{N}\overset{\mathrm{def}}{=}\set(:A\in\mathcal{B}\left(
\kappa\right)  |Q\left(  A\right)  \text{ is not meager}:)\text{,}%
\]%
\[
\mathcal{N}\left(  \varepsilon\right)  \overset{\mathrm{def}}{=}%
\set(:A\in\mathcal{B}\left(  \kappa\right)  |\mathrm{int}\left(  Q\left(
A,\varepsilon\right)  \right)  \neq\varnothing:)\text{,}%
\]%
\[
\mathcal{N}(\varepsilon,\eta)\overset{\mathrm{def}}{=}\set(:A\in
\mathcal{B}\left(  \kappa\right)  |\exists x\in A~\bar{B}\left(
x,\eta\right)  \subset Q\left(  A,\varepsilon\right)  :),
\]
whence
\[
\mathcal{N}=\bigcup_{p\in\mathbb{N}^{\ast}}\mathcal{N}\left(  1/p\right)
\subset\bigcup_{p,q\in\mathbb{N}^{\ast}}\mathcal{N}\left(  1/p,1/q\right)
\text{.}%
\]

We prove now that $\mathcal{N}(\varepsilon,\eta)$ is closed. Let $\{ A_{n} \}
_{n}$ be a sequence of surfaces in $\mathcal{N}(\varepsilon,\eta)$ converging
to $A\in\mathcal{B}\left(  \kappa\right)  $. By Lemma \ref{SEL}, we can assume
that $A_{n}$ and $A$ are embedded in the same compact metric space $Z$. By
hypothesis, there exists $x_{n}\in A_{n}$ such that $\bar{B}\left(  x_{n}%
,\eta\right)  \subset Q\left(  A_{n},\varepsilon\right)  $. We can assume that
$\{ x_{n} \}_{n} $ converges to $x\in A$. A point $z\in\bar{B}\left(
x,\eta\right)  $ is limit of a sequence of points $z_{n}\in Q\left(
A_{n},\varepsilon\right)  $. Hence there exist four segments $\sigma_{n}^{1}$,
\ldots, $\sigma_{n}^{4}$ between $z_{n}$ and $y_{n}\in F_{z_{n}} $. By
selecting a subsequence, we can assume that $\{y_{n}\}_{n} $ converges to
$y\in A$. By Lemma \ref{lim_Fx}, $y\in F_{z}$. We can also assume that $\{
\sigma_{n}^{i}\}_{n} $ converges to some segment $\sigma_{n}^{i}$ from $z$ to
$y$, $i=1$, \ldots, $4$. Moreover, $d_{H}^{A}\left(  \sigma^{i},\sigma
^{j}\right)  =\lim d_{H}^{A}\left(  \sigma_{n}^{i},\sigma_{n}^{j}\right)
\geq\varepsilon$. Hence $\bar B\left(  x,\eta\right)  \subset Q\left(
A,\varepsilon\right)  $, and $A\in\mathcal{N}(\varepsilon,\eta)$.

The proof that $\mathcal{N} (\varepsilon,\eta)$ has empty interior is similar
to the proof of the emptiness of the interior of $\mathcal{M}(\varepsilon
,\eta)$ in the proof of Theorem \ref{TH2}.

It follows that $\mathcal{N}$ is meager, whence the conclusion.
\end{proof}

\begin{lemma}
\label{y_conical} Let $S$ be a closed topological surface such that
$\C_{\kappa}\left(  S\right)  $ is non-empty. Choose $d\in\C_{\kappa}\left(
S\right)  $ and $x$, $y\in S$ such that $F_{x}^{d}=\{y\}$. Then there exist
$d_{n}\in\C_{\kappa}\left(  S\right)  $ converging to $d$, and $x_{n},
y_{n}\in S$ converging to $x$ and $y$ respectively, such that $x_{n}$ and
$y_{n}$ are conical points on $(S,d_{n})$ and $F_{x_{n}}^{d_{n}}=\{y_{n}\}$.
\end{lemma}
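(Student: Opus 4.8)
The plan is to leave the two points fixed, taking $x_n=x$ and $y_n=y$ (constant sequences, which of course converge to $x$ and $y$), and to produce $d_n$ by a purely local surgery that makes $x$ and $y$ conical while controlling distances in a very rigid way. First note that $x\neq y$, since $\rho_x^d(y)=\max\rho_x^d>0$. Choose $r<\tfrac12 d(x,y)$, so that the closed balls $\bar B(x,r)$ and $\bar B(y,r)$ are disjoint.

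The heart of the construction is an angle-reducing surgery: inside each of the two small discs I reduce the total angle at the central point by a fixed amount $\delta>0$, by a local cut-and-glue that lowers the cone angle, leaving the metric unchanged outside $\bar B(x,r)\cup\bar B(y,r)$ and leaving unchanged the radial distance from each centre to the bounding sphere. Since the angles identified together never exceed $2\pi$ (they only decrease), Alexandrov's gluing theorem (Lemma \ref{gluing}) guarantees that the resulting metric $d_n$ still defines a surface in $\mathcal A(\kappa)$; and because the surgery is supported in two discs that are sewn up along geodesics issuing from their centres, the underlying topological surface is unchanged, so $d_n\in\C_\kappa(S)$. As each centre now has total angle strictly below $2\pi$, both $x$ and $y$ are conical for $d_n$. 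Performing the surgery inside discs of radius $r_n\to0$ makes the distortion $O(r_n)$, hence $d_n\to d$.

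The reason uniqueness of the farthest point survives is the following rigid pair of properties of the surgery, which I would isolate as the key step: (a) the surgery does not increase distances, $d_n\le d$ pointwise (sharpening a cone only creates shortcuts across the reglued edge, never forces detours); and (b) it preserves the distance between the two centres, $d_n(x,y)=d(x,y)$ (any path from $x$ to $y$ meets $\partial\bar B(x,r)$ and $\partial\bar B(y,r)$, its radial portions inside the discs keep length $r$, and its exterior portion is untouched, so the old minimising segment is still minimising). Granting (a) and (b), uniqueness is immediate and, pleasantly, requires no semicontinuity argument of the type of Lemma \ref{lim_Fx}: for every $z\neq y$ one has
\[
\rho_x^{d_n}(z)=d_n(x,z)\le d(x,z)<d(x,y)=d_n(x,y)=\rho_x^{d_n}(y),
\]
the strict middle inequality being exactly the hypothesis $F_x^d=\{y\}$. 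Hence $F_x^{d_n}=\{y\}$, as required.

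The main obstacle is to carry out the angle-reducing surgery so that (a), (b), the topological type of $S$, and the curvature bound all hold simultaneously while the perturbation stays small. Genuinely lowering the cone angle requires excising a thin wedge and sewing the cut up, and one must check both that this can be done without leaving a free boundary arc (so that the result is again a closed surface homeomorphic to $S$) and that the excision of the wedge is more than compensated by the regluing, so that distances actually decrease rather than increase; property (a) is precisely where the thinness of the wedge and the fact that its two sides are geodesics emanating from the centre must be used. I expect this distance-monotonicity, together with the verification that the glued configuration satisfies the angle hypothesis of Lemma \ref{gluing} at every identified point, to be the delicate part of the argument; once these are in hand, everything else is bookkeeping.
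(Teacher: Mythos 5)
Your uniqueness mechanism is exactly the right one --- it is in fact the mechanism the paper uses: the surgery weakly decreases all distances while preserving $d(x,y)$, and then $F^{d_n}_{x}=\{y\}$ drops out of the strict inequality $d(x,z)<d(x,y)$ with no semicontinuity needed. But the surgery you build it on does not exist, and the step you yourself flag as delicate is fatal rather than delicate. There is a Gauss--Bonnet obstruction to any \emph{local} angle-reducing modification. On a surface with curvature $\geq\kappa$ the curvature measure $\omega$ satisfies $\omega\geq\kappa\,\mathcal{H}^{2}$. If you change the metric only inside $\bar{B}(x,r_n)$, the exterior side of the seam is untouched, and since the total angle at each seam point of the glued surface must remain $\leq2\pi$ (curvature atoms must be nonnegative), the inner turning of the seam cannot decrease; Gauss--Bonnet then forces the total curvature of the new cap to be at most that of the old one. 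Now take $\kappa=0$ and $d$ locally flat near $x$ --- say $x$ an interior point of a face of a convex polyhedron with a unique farthest point, a perfectly legitimate input to the lemma: the old cap has total curvature $0$, while your new cap must carry an atom $\delta_n>0$ at the apex plus a nonnegative measure elsewhere, a contradiction for every $\delta_n>0$, however small. The round sphere in $\C_{1}(S^{2})$ kills the construction the same way ($\omega=\mathcal{H}^{2}$ exactly, so there is no budget for an atom). So properties (a) and (b) cannot be realized by a surgery supported in small balls, and even your weaker design decision --- keeping $x_n=x$ and $y_n=y$ fixed --- demands more than the lemma asks and more than cut-and-paste can deliver.

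The ``free boundary arc'' problem you mention is the same obstruction seen topologically: a wedge with apex at $x$ bounded by two geodesics of length $r$ ends at two \emph{distinct} points of the distance circle, so sewing the geodesic sides leaves the circular arc unglued; to close up, the two bounding geodesics must meet again, i.e., you need a geodesic digon --- and from a non-conical point such digons only reach out to the cut locus, a global object. That is precisely what the paper does. It first perturbs $d$ to a typical nearby metric $d'$ (so that Theorem \ref{TH2}, Lemma \ref{T2} and Lemma \ref{EndPoints} supply a nearby $x'$ with unique farthest point $y'$, joined by at most three segments, and a good cut-locus structure), then takes $w\in C(y')$ joined to $y'$ by exactly two segments bounding a thin digon $\Delta$, removes $\Delta$ and glues the two geodesic sides (Lemma \ref{gluing} applies because all glued angles only decrease). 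Distances weakly decrease, and $d''(x',y')=d'(x',y')$ because segments from $x'$ to $y'$ cannot cross the unique segment $\gamma_v$ from $y'$ to a nearby cut point $v$, so they avoid the digon; this is your (a)+(b) argument implemented globally. Finally, Perel'man's stability theorem (Lemma \ref{LP}) transports the glued surface back to a metric on $S$, with $x_n\to x$ and $y_n\to y$ merely converging rather than fixed --- which is all the statement requires.
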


\begin{proof}
Choose $d^{\prime}$ close to $d$, such that most points in $\left(
S,d^{\prime}\right)  $ have a unique farthest point to which they are joined
by at most three segments, and such that, for any point $z$, most points in
$S$ belong to $C^{d^{\prime}}\left(  z\right)  $ and are joined to $z$ by
unique segments. This is possible by virtue of Lemma \ref{EndPoints}, Lemma
\ref{T2}, Theorem \ref{TH2} and Lemma \ref{LP}. Chose a point $x^{\prime}$
close to $x$, whose unique farthest point $y^{\prime}$ satisfies
$\#\mathfrak{S}_{x^{\prime}y^{\prime}}\leq3$. By Lemma \ref{lim_Fx},
$y^{\prime}$ tends to $y$ when $d^{\prime}\rightarrow d$ and $x^{\prime
}\rightarrow x$.

Take a point $v\in C(y^{\prime})$, distinct from $x^{\prime}$, joined to
$y^{\prime}$ by a unique segment $\gamma_{v}$. Take $w\in C(y)$ close to $v$
and joined to $y$ by precisely two segments $\gamma_{w}^{1}$ and $\gamma
_{w}^{2}$; this is possible, because $C(y)$ has at most countably many
ramifications points, see \cite{ST}. Remove the part $\Delta$ of
$(S,d^{\prime})$ bounded by $\gamma_{w}^{1}\cup\gamma_{w}^{2}$ and containing
$v$, and glue the rest by identifying the two boundary segments. Denote by
$\left(  S^{\prime\prime},d^{\prime\prime}\right)  $ the obtained surface, and
by $f:S\setminus\Delta\rightarrow S^{\prime\prime}$ the canonical surjection.
When $w$ tends to $v$, $\gamma_{w}^{1}$ and $\gamma_{w}^{2}$ both tends to
$\gamma_{v}$ (because $\gamma_{v}$ is the only segment between $y$ and $v$),
and $\left(  S^{\prime\prime},d^{\prime\prime}\right)  $ obviously tends to
$\left(  S,d^{\prime}\right)  $. Clearly $d^{\prime\prime}\left(  f\left(
p\right)  ,f\left(  q\right)  \right)  \leq d^{\prime}\left(  p,q\right)  $
for any $p$, $q\in S$.

We claim that, for $w$ close enough to $v$, $d^{\prime\prime}(x^{\prime
},y^{\prime})=d^{\prime}(x^{\prime},y^{\prime})$. Let $\gamma$ be a segment on
$\left(  S^{\prime\prime},d^{\prime\prime}\right)  $ between $x^{\prime}$and
$y^{\prime}$. If $\gamma$ does not intersect $f\left(  \gamma_{w}^{1}%
\cup\gamma_{w}^{1}\right)  $, then the claim holds. Otherwise, the limit of
$\gamma$, which is a segment of $\left(  S,d^{\prime}\right)  $ between
$x^{\prime}$ any $y^{\prime}$, should intersect $\gamma_{v}$, which is impossible.

It follows that $F_{x^{\prime}}^{\left(  S^{\prime\prime},d^{\prime\prime
}\right)  }=\left\{  y^{\prime}\right\}  $.

By Lemma \ref{LP}, there exists a homeomorphism $h:S\rightarrow S^{\prime
\prime}$ such that $\mathrm{dis}\left(  h\right)  \rightarrow0$, $x^{\ast
}\overset{\mathrm{def}}{=}h^{-1}\left(  x^{\prime}\right)  \rightarrow
x^{\prime}$ and $y^{\ast}\overset{\mathrm{def}}{=}h^{-1}\left(  y^{\prime
}\right)  \rightarrow y^{\prime}$ when $w\rightarrow v$. Define the metric
$d^{\ast}$ on $S$ by $d^{\ast}\left(  p,q\right)  =d^{\prime\prime}\left(
h\left(  p\right)  ,h\left(  q\right)  \right)  $, we have $y^{\ast}$ conical
point for $d^{\ast}$ and $F_{x^{\ast}}^{d^{\ast}}=\left\{  y^{\ast}\right\}  $.

A similar procedure applied to $x$ produces the desired metric.\medskip
\end{proof}

The following lemma is part of Theorem 3 in \cite{gp}, see also Theorem 1 in
\cite{vz1}. The theorem was originally stated for all points $x\in A$ and only
for $F_{x}$, but the proof given in \cite{vz1} holds here too.

\begin{lemma}
\label{F_1} If $A\in\mathcal{A}\left(  1\right)  $ and $x\in A$ such that
$d(x,F_{x})>\pi/2$ then $\#M_{x}=\#F_{x}=1$.
\end{lemma}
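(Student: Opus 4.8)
The plan is to localize the whole analysis to the \emph{far cap}
$U=\{z\in A:\rho_x(z)>\pi/2\}$, which is nonempty by hypothesis since $F_x\subset U$. Everything rests on two comparison facts with the model sphere $\mathbb{M}_1$, both consequences of Toponogov's theorem for curvature $\ge 1$ recalled in Section \ref{SAS}: first, that $U$ is totally convex; second, that $\rho_x$ is concave along every geodesic contained in $U$. Granting these, $\rho_x|_U$ is a concave function on a convex set, which immediately yields $\#F_x=1$ together with the fact that the farthest point is the only local maximum of $\rho_x$ lying in $U$.

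For total convexity of $U$, I would take $y_1,y_2\in U$ and a segment $\gamma$ joining them. In the comparison triangle on $\mathbb{M}_1$ the vertices $\tilde y_1,\tilde y_2$ lie in the open spherical cap of radius $<\pi/2$ centered at the antipode of $\tilde x$; this cap is geodesically convex, so the comparison segment stays inside it, i.e.\ its points are at model distance $>\pi/2$ from $\tilde x$. Since the point-on-side form of Toponogov gives $\rho_x(\gamma(t))\ge d_{\mathbb{M}_1}(\tilde x,\tilde\gamma(t))$, the whole of $\gamma$ stays in $U$. For concavity, along a unit-speed geodesic the same comparison gives, in the barrier sense, $(\cos\rho_x)''+\cos\rho_x\ge 0$ (with equality on $\mathbb{M}_1$); writing $f=\rho_x\circ\gamma$ and using $|f'|\le1$ this rearranges to
\[
f''\le \cot(f)\,\bigl(1-(f')^2\bigr),
\]
which is $\le 0$ precisely where $f>\pi/2$. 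Hence $\rho_x$ is concave on $U$. It is nowhere locally constant, since off the measure-zero cut locus its gradient has unit norm; therefore its maximum set, being convex and containing no nondegenerate segment, is a single point. This is $\#F_x=1$, and any local maximum of $\rho_x$ lying in $U$ is a local, hence global, maximum of $\rho_x|_U$ and so coincides with the farthest point $y_0$.

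It remains to prove $\#M_x=1$, that is, to exclude a second local maximum $y$ with $\rho_x(y)\le \pi/2$. Such a $y$ is a critical point of $\rho_x$: by the first variation formula every direction in $\Sigma_y$ lies within angle $\pi/2$ of the direction of some segment to $x$, and the same holds at $y_0$. Choosing a segment $[y\,y_0]$ together with segments to $x$ realizing the net condition at its two ends, I would form the geodesic triangle $xyy_0$, whose angles at $y$ and at $y_0$ are $\le\pi/2$; by Toponogov's angle comparison the corresponding angles of the model triangle are $\le\pi/2$ as well. Feeding this into the spherical law of cosines at the two vertices gives $\cos\rho_x(y)\ge\cos\rho_x(y_0)\cos L$ and $\cos\rho_x(y_0)\ge\cos\rho_x(y)\cos L$, where $L=d(y,y_0)$. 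When $\rho_x(y)>\pi/2$ as well, so that both cosines are negative, a one-line manipulation of these two inequalities forces $L=0$, contradicting $y\ne y_0$; this reproves uniqueness inside $U$ and also disposes of every $y$ with $d(y,y_0)\le\pi/2$.

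The main obstacle is the residual case of a local maximum $y$ with $\rho_x(y)\le\pi/2$ lying at distance $>\pi/2$ from $y_0$: here the single comparison triangle above is self-consistent, so no contradiction can be extracted from it alone. I expect this is exactly where the curvature lower bound must be used in an essentially two-dimensional way: the critical-point condition forces segments to $x$ to surround $y$ from all sides while $y$ stays close to $x$, and in a surface with curvature $\ge 1$ such a configuration cannot coexist with a strictly farther maximum elsewhere. Making this precise, rather than the routine comparison computations of the previous paragraphs, is the step that needs real work; alternatively one may import it from the argument of \cite{gp}, \cite{vz1} that the present statement is quoting.
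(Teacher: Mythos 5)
The gap you flag at the end is real, and it is not a technicality: it is the entire content of the $\#M_{x}=1$ claim. Your comparison computations up to that point are correct. The barrier inequality $(\cos\rho_{x}\circ\gamma)''+\cos\rho_{x}\circ\gamma\geq0$ has the right direction for curvature $\geq1$, the convexity of $U=\{\rho_{x}>\pi/2\}$ via the open-hemisphere argument is sound, and the two hinge inequalities $\cos\rho_{x}(y)\geq\cos\rho_{x}(y_{0})\cos L$ and $\cos\rho_{x}(y_{0})\geq\cos\rho_{x}(y)\cos L$ at a pair of critical points are correctly derived and correctly exploited. But in the residual regime $\rho_{x}(y)\leq\pi/2<L$ they are mutually consistent: take for instance $\rho_{x}(y)=1$, $\rho_{x}(y_{0})=1.7$, $L=2$, which satisfies both inequalities, the triangle inequality, and the perimeter bound $2\pi$. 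So, as you suspect, no rearrangement of that single comparison triangle can exclude a ``low'' local maximum far from $y_{0}$; a genuinely different mechanism is needed, and deferring it to \cite{gp}, \cite{vz1} leaves the proposal incomplete as a self-contained proof. For what it is worth, the paper does not prove the lemma either: it quotes it, remarking only that Theorem 1 of \cite{vz1} (stated there under a hypothesis on all points and only for $F_{x}$) has a proof which localizes to a single $x$ and covers $M_{x}$. So your closing fallback is in substance the paper's own ``proof''; judged as an independent argument, yours stands or falls on the missing case, and it currently falls.

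There is also a smaller flaw earlier, fortunately repairable from your own inequalities. To conclude $\#F_{x}=1$ you argue that $\rho_{x}$ cannot be constant on a nondegenerate segment because its gradient has unit norm off the cut locus, which has measure zero; this does not suffice, since a segment joining two farthest points could lie entirely inside $C(x)$, which also has measure zero. The repair is immediate: if $\rho_{x}\equiv m>\pi/2$ along a nondegenerate geodesic, then $u=\cos\rho_{x}$ is constant equal to $\cos m<0$ there, so $u''+u=\cos m<0$, contradicting your barrier inequality $u''+u\geq0$. Equivalently, apply Toponogov to the isosceles comparison triangle with apex $\tilde{x}$ over that segment: the model median satisfies $\cos(\mathrm{med})=\cos m/\cos(L/2)<\cos m$, so the actual midpoint is at distance strictly greater than $m$ from $x$, contradicting $m=\max\rho_{x}$. (Note $L<\pi$ here, since $L=\pi$ would force perimeter $2m+\pi>2\pi$.) With that patch, your proof of $\#F_{x}=1$ --- and of the stronger fact that $y_{0}$ is the only critical point of $\rho_{x}$ in $U$, or within distance $\pi/2$ of $y_{0}$ --- is complete and correct; only the remaining half of $\#M_{x}=1$ is open, and that is precisely the part the paper imports from \cite{gp} and \cite{vz1}.
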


A \textit{long loop} at $x\in A$ is the union of two segments from some $y\in
F_{x}$ to $x$, whose directions at $y$ divide $\Sigma_{y}$ into two arcs of
length at most $\pi$.

\begin{lemma}
\label{Fix_A} Let $S$ be a closed topological surface such that $\C\left(
\kappa\right)  $ is non-empty. For most Alexandrov metrics on $S$, for most
$x\in S$ and any $y\in F_{x}$, there are at least three segments between $x$
and $y$.
\end{lemma}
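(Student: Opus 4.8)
The plan is to run a Baire-category argument in the auxiliary space $\C_\kappa(S)$ of Alexandrov metrics on $S$, which is Baire by Lemma \ref{Baire_A}, following the template of Theorem \ref{TH2} and Lemma \ref{T2}. For $d\in\C_\kappa(S)$ let $P(d)$ be the set of points $x$ admitting a farthest point joined to $x$ by at most two segments; I want to show that $\{d : P(d)\text{ is not meager}\}$ is meager, so that for most $d$ the set $P(d)$ is meager, i.e. most $x$ have at least three segments to each of their farthest points. As before I would cover this bad set of metrics by scale-pieces $\mathcal{N}(\varepsilon,\eta)$, consisting of metrics possessing a ball $\bar B(x,\eta)$ all of whose points are deficient at scale $\varepsilon$, and prove each $\mathcal{N}(\varepsilon,\eta)$ nowhere dense. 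The essential preliminary is a reformulation of deficiency through the first variation formula: if $y\in F_x$ then $y$ is a local maximum of $\rho_x$, so every direction in $\Sigma_y$ lies within $\pi/2$ of the direction of some segment of $\mathfrak{S}_{xy}$, i.e. these directions form a $\pi/2$-net of the circle $\Sigma_y$. A single segment then forces $\mathrm{length}(\Sigma_y)\le\pi$ (a conical point of angle at most $\pi$), and exactly two segments force a long loop (their two directions cut $\Sigma_y$ into arcs of length at most $\pi$). I would track these two closed geometric conditions rather than the segment count itself.

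Closedness of $\mathcal{N}(\varepsilon,\eta)$ would follow from the limit lemmas: given $d_n\to d$ with witnesses $x_n\to x$, any $z\in\bar B(x,\eta)$ is a limit $z_n\to z$ of deficient points, whose farthest points $y_n\to y\in F_z$ by Lemma \ref{lim_Fx}, and whose (at most two) segments converge to segments from $z$ to $y$. If the two limiting directions remain distinct we get a long loop, while if they merge the remaining arc has length at most $\pi$ and $z$ is a sharp conical point; in either case $z$ stays deficient. It is crucial here to argue through the long-loop and sharp-cone conditions, which are closed, rather than through the bound $\#\mathfrak{S}_{zy}\le 2$ directly, since extra segments may appear in a limit and the raw count is not upper semicontinuous. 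This is precisely the feature that distinguishes the present situation from Lemma \ref{T2}, where the bad condition ``four well-separated segments'' was itself stable.

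For the empty interior I would argue as in Theorem \ref{TH2}: were some $d$ interior to $\mathcal{N}(\varepsilon,\eta)$, I would produce, arbitrarily close to $d$, a metric for which most points have at least three segments to their unique farthest point; such a metric cannot contain a whole ball of deficient points, a contradiction. Producing this dense family is the real work, and it is where Lemma \ref{y_conical} and Lemma \ref{01} enter. Near-smooth regions would be handled by the perturbation method behind Lemma \ref{LRR}, reached through the Riemannian approximation of Lemma \ref{LAR}: there a two-segment farthest point is unstable, since breaking the forced antipodality of its directions destroys the $\pi/2$-net, so the maximum migrates and the deficiency disappears. The genuinely new, conical deficiencies — a single segment to a sharp cone, or a long loop at a conical point — would be removed by local surgery. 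Using Lemma \ref{y_conical} I would first make $x$ and $y$ conical with $F_x=\{y\}$, then develop the long loop into $\mathbb{M}_\kappa$ to obtain the isosceles configuration $x_0x_1x_2$ of Lemma \ref{01} (with $x_0=y$), and perform a cut-and-reglue of size $\varepsilon$ near $y$ that inserts a third segment; the sharp-cone case would be treated by inserting a small wedge at $y$ to push $\mathrm{length}(\Sigma_y)$ above $\pi$, which forces extra segments or migrates the maximum.

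The step I expect to be the main obstacle is this surgery, specifically the verification that it creates a genuine third segment without producing a shortcut that would shorten $d(x,y)$ or displace the global maximum of $\rho_x$. Lemma \ref{01}, placing the circumcenter of $x_0x_1x_2$ inside the triangle $y_1y_2z$, is exactly the quantitative input guaranteeing that for $\varepsilon$ small no competing path sneaks through the regrafted region; the continuity of the radius function from the proof of Lemma \ref{lim_Fx}, together with Lemma \ref{F_1} when $\kappa=1$, keeps the farthest point unique and in place. Two bookkeeping points remain: the case $\kappa=-1$, for which Lemma \ref{01} is not stated and which I would reduce to the flat model by Remark \ref{k} and Lemma \ref{LAR}, exploiting that the surgery region shrinks with $\varepsilon$; and the fact that the closed bad set may contain a few non-deficient points (three segments two of which happen to bound a long loop), which is harmless since such points form a meager set and do not enlarge the interior.
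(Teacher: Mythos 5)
Your overall architecture is the paper's: work in $\C_{\kappa}(S)$ (Baire by Lemma \ref{Baire_A}), encode the deficiency by the \emph{closed} long-loop condition rather than the raw segment count (your first-variation dichotomy --- one segment forces $\mathrm{length}(\Sigma_{y})\leq\pi$, two segments force a long loop --- is exactly the reasoning the paper leaves implicit, handling the sharp-cone case via the typical absence of conical points), cover the bad metrics by countably many closed pieces (the paper uses balls $B(z,1/q)$ around a fixed countable dense set, with respect to a fixed background metric, rather than your moving centers --- a harmless difference), and kill the interior by surgery: Lemma \ref{y_conical} makes $x$ and $y$ conical with $F_{x}=\{y\}$ (conicality of $x$ is what absorbs the extra angle $2(\alpha+\beta)\leq\omega_{x}$ required by Alexandrov's gluing theorem), then one cuts along the long loop, inserts the quadrilateral of Lemma \ref{01} so that the circumcenter becomes the new unique farthest point joined to $x$ by three segments with no long loop, and transfers back to a metric on $S$ via Lemma \ref{LP}. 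Your detour through Riemannian approximation and Lemma \ref{LRR} for the ``near-smooth'' deficiencies is not in the paper and in fact cannot carry weight: Lemma \ref{LRR} only gives \emph{at most} three segments, and the claimed instability of two-segment farthest points under Riemannian perturbation is not delivered by any cited result; the paper instead picks a \emph{typical} $d'$ inside the putative interior (satisfying Theorem \ref{TH2}, Lemma \ref{T2}, Lemma \ref{EndPoints}, no conical points) and runs the surgery directly on it.

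The genuine gap is the case $\kappa=1$ with $d(x,y)>\pi/2$, which your proposal does not address at all. Lemma \ref{01} is proved only for $l<\pi/2$ when $\kappa=1$, and the obstruction is real, not technical: for $l>\pi/2$ the isosceles triangles $x_{0}y_{1}y_{2}$, $x_{1}y_{1}z$, $x_{2}y_{2}z$ have base angles larger than $\pi$, so the insert does not exist; and no admissible reduction brings you below $\pi/2$, since the surgered metric must stay $\delta$-close to $d$ in $\C_{\kappa}(S)$ --- rescaling by $\lambda$ close to $1$ only settles the borderline $d(x,y)=\pi/2$. The paper resolves this (Subcase 2.3) by an entirely separate construction: insert along the loop two slices of a round sphere of radius $r=d(x,F_{x})/\pi<1$, create conical points on the median half-circles, and tune a point $v_{0}$ so that the two new pairs of segments from $x$ to $y$ acquire equal lengths; the first variation formula then makes $y$ a strict local maximum of $\rho_{x}$ with four segments no two of which form a loop, and Lemma \ref{F_1} --- applicable precisely because $d(x,F_{x})>\pi/2$ --- yields $F_{x}=M_{x}=\{y\}$. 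Your passing use of Lemma \ref{F_1} to ``keep the farthest point in place'' is a different and much weaker role and does not substitute for this construction. A secondary error: your reduction of $\kappa=-1$ via Remark \ref{k} cannot work, since rescaling maps $\mathcal{A}(-1)$ to $\mathcal{A}(-1/\delta^{2})$ and never changes the sign of the bound; that case is in fact the easy one, as the paper simply inserts the flat $\mathbb{M}_{0}$ quadrilateral of Lemma \ref{01}, which is legitimate by Alexandrov's gluing theorem because curvature-$0$ pieces satisfy any bound $\kappa\leq0$.
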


\begin{proof}
For any metric $d\in\C_{\kappa}\left(  S\right)  $, the set
\[
D\left(  d\right)  \overset{\mathrm{def}}{=}\set(:x\in
S|\mathrm{there\;exists\;a\;long\;loop\;at\;}x\text{ }%
\mathrm{with\;respect\;to\;}d:)
\]
is clearly closed, from the definition of a long loop.

Consider a countable dense subset $Z$ of $S$. We have
\[
\mathcal{D}\overset{\mathrm{def}}{=}\set(:d\in\C_{\kappa}\left(  S\right)
|\mathrm{int}\left(  D(d)\right)  \neq\emptyset:)\subset\bigcup
_{\substack{z\in Z\\q\in\mathbb{N}^{\ast}}}\mathcal{D}_{z,q}\,\text{,}%
\]
where
\[
\mathcal{D}_{z,q}\overset{\mathrm{def}}{=}\set(:d\in\C_{\kappa}\left(
S\right)  |B\left(  z,\frac{1}{q}\right)  \subset D(d):)\text{.}%
\]
Here, the balls $B(z,1/q)$ are understood with respect to any fixed continuous
metric on $S$. The sets $\mathcal{D}_{z,q}$ are clearly closed, it remains to
prove that they have empty interior.

Assume on the contrary that there exist $z\in Z$ and $q\in\mathbb{N}^{\ast} $
such that $\mathrm{int}\left(  \mathcal{D}_{z,q} \right)  \neq\emptyset$; in
the rest of the proof we shall derive a contradiction.

By Lemma \ref{LP}, a dense set in $\mathcal{A}(\kappa)$ corresponds to a dense
set in $\C_{\kappa}(S)$, so it is possible to choose a metric $d^{\prime}%
\in\mathrm{int} \left(  \mathcal{D}_{z,q}\right)  $ such that $(S,d^{\prime})$
is typical in $\mathcal{A}(\kappa)$. By Theorem 3.1 in \cite{IRV2},
$(S,d^{\prime})$ has no conical points, and by Lemma \ref{T2}, we can choose a
point $x^{\prime}\in B(z,\frac{1}{3q})$ which is joined to its unique farthest
point $y^{\prime}$ by at most three segments, two of which are forming a long
loop $\Gamma_{x^{\prime}}^{\prime}$.

The space of directions at each point in $(S,d^{\prime})$ is a circle, hence
we can define locally around $x^{\prime}$ and around $y^{\prime}$ a left and a
right side of $\Gamma_{x^{\prime}}^{\prime}$. We apply twice the procedure
described by Lemma \ref{y_conical} and its proof, one time for each side of
$\Gamma_{x^{\prime}}^{\prime}$. Doing so allows us to assume that, after
cutting, $\Gamma_{x^{\prime}}^{\prime}$ still divide $\Sigma_{x^{\prime}}$ in
two almost equally long curves. Here, `almost equally long' means that the
ratio between the length of the two connected components can be chosen
arbitrary close to one. For the new metric $d\in\mathrm{int}\left(
\mathcal{D}_{z,q} \right)  $, there are conical points $x\in B\left(
z,\frac{1}{2q}\right)  $, $y\in S$ such that $F_{x}=\left\{  y\right\}  $. A
long loop $\Gamma_{x}$ at $x$ through $y$ divides $\Sigma_{y}$ into two
equally long curves. Let $\gamma^{1}$, $\gamma^{2}\in\mathfrak{S}_{xy}$ be the
two segments from $x$ to $y$ composing $\Gamma_{x}$. We may assume, moreover,
that the singular curvatures at $x$ and $y$ verify $\omega_{x}\leq\omega_{y}$.

\medskip

\textbf{Case 1:} $\kappa\in\{-1,0\}$.

Consider in $\mathbb{M}_{0}$ a quadrilateral $L=x_{0}x_{1}zx_{2}$ defined as
in Lemma \ref{01} (see Figure \ref{Fig2}) with $l=d(x,y)$ and $\varepsilon$
small enough to ensure that $2(\alpha+\beta)\leq\omega_{x}$.

Cut $(S,d)$ along $\Gamma_{x}$ and insert $L$ as follows: identify $x_{1}z$ to
$x_{2}z$, identify $x_{0}y_{1}$ to one image of $\gamma^{1}$ and $x_{0}y_{2}$
to the other image of $\gamma^{1}$, and identify $x_{1}y_{1}$ to one image of
$\gamma^{2}$ and $x_{2}y_{2}$ to the other image of $\gamma^{2}$. On the
resulting Alexandrov surface $A_{0}=(S_{0},d_{0})$ (see Lemma \ref{gluing}),
denote by $L_{0}$ the image of $L$, and by the same letters $x,y$ the images
of $x$ and $y$. By Lemma \ref{01}, there exists a point $y_{0}$ (the
circumcenter of $x_{0}x_{1}x_{2}$) interior to $L_{0}$ such that
$d_{0}(x,y_{0})>d_{0}(x,z)$, for any $z\in L_{0}$. It follows that
$F_{x}^{d_{0}}=\{y_{0}\}$ and there are three segments from $x$ to $y_{0}$
which, moreover, make angles smaller that $\pi$ at $y_{0}$. Since $y_{0}$ is a
smooth point of $(S,d_{0})$, there is no long loop at $x$.

Using Lemma \ref{LP}, one can define a metric $d_{1}$ on $S$, such that
$\left(  S,d_{1}\right)  =A_{0}$ and such that the point $x_{1}$ corresponding
to $x$ belongs to $B\left(  z,\frac{1}{q}\right)  $. Hence $d_{1}$ does not
belongs to $\mathcal{D}_{z,q}$ and a contradiction is obtained.

\medskip

\textbf{Case 2:} $\kappa=1$.

\textbf{Subcase 2.1:} $d(x,y)<\pi/2$.

The construction is similar to that at Case 1, but this time we take $L$ in
$\mathbb{M}_{1}$.

\textbf{Subcase 2.2:} $d(x,y)=\pi/2$.

Apply Subcase 2.1 to $\left(  S,\lambda d\right)  $, where $\lambda$ is
slightly less than one.

\textbf{Subcase 2.3:} $d(x,y)>\pi/2$. This cannot be done similarly to Subcase
2.1, because now the isosceles triangles $x_{0}y_{1}y_{2}$, $x_{1}y_{1}z$ and
$x_{2}y_{2}z$ have base angles larger that $\pi$.

It is known that $\mathrm{diam}^{d}(S)\leq\pi$ for any $d$ such that $\left(
S,d\right)  \in\mathcal{A}\left(  1\right)  $ \cite[Theorem 3.6]{BGP}. By
replacing $d$ by $\lambda d$ (with $\lambda<1$ close to $1$), we may assume
without loss of generality that $\mathrm{diam}^{d}(S)<\pi$.

Consider the sphere $\Lambda$ of radius $r\overset{\mathrm{def}}{=}%
d(x,F_{x})/\pi<1$; put $\kappa^{\prime}=\frac{1}{r^{2}}$. Then $\Lambda
\in\mathcal{A}(\kappa^{\prime},2)\subset\mathcal{A}(1,2)$. Consider a slice
$\Theta$ of $\Lambda$ determined by great half-circles making an angle of
$\phi$ at their intersection point, such that $2\phi=\omega_{y}$.

Cut $(S,d)$ along $\Gamma_{x}$ and insert two copies of $\Theta$. Denote by
$A_{1}$ the resulting Alexandrov surfaces (see Lemma \ref{gluing}) and by
$\Theta_{1}$, $\Theta_{2}\subset A_{1}$ the two copies of $\Theta$ on $A_{1}$.
Figure \ref{Fig1} illustrates the approximation procedure for this subcase.

\begin{figure}[ptb]
\centering\includegraphics[width=0.9\textwidth]{\pth 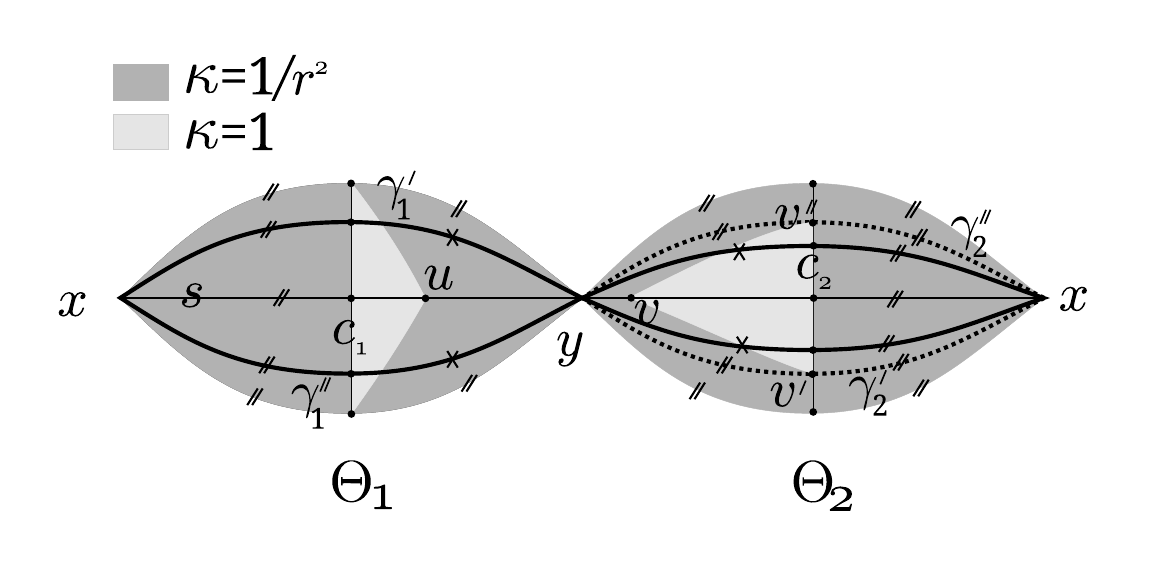}\caption{Approximation
procedure in the proof of Lemma \ref{F_1}.}%
\label{Fig1}%
\end{figure}

Denote by $s_{i}$ the median great half-circle of $\Theta_{i}$ ($i=1,2$). By
considering only subslices of $\Theta_{1}$, $\Theta_{2}$, we may assume
without loss of generality that, on the one hand, the angle at $y$ between the
third segment to $x$ (if it exists) and any direction toward $\Theta_{1}%
\cup\Theta_{2}$ is distinct from $\pi$, and on the other hand, the angle at
$y$ between $s_{1}$ and $s_{2}$ is exactly $\pi$. Put $s\overset{\mathrm{def}%
}{=}s_{1}\cup s_{2}$ and denote by $c_{1}$, $c_{2}$ the arcs of great circles
in $\Theta_{1}$, $\Theta_{2}$ orthogonal to $s$, through the midpoints of
$s_{1}$ and $s_{2}$ respectively. Consider a point $u\in s$ close to $c_{1}$
and replace the triangle whose vertices are $u$ and the endpoints of $c_{1}$
by a triangle of constant curvature $1$ with the same side lengths. After
this, the distances on $\Theta_{1}$ slightly decrease and, by Toponogov's
comparison property, $u$ becomes a conical point. Due to the symmetry of
$\Theta_{1}$ with respect to $s$, on the resulting surface $A_{2}$ there are
two segments from $y$ to $x$, say of length $l_{1}$. Denote them by
$\gamma_{1}^{\prime},\gamma_{1}^{\prime\prime}$ and denote by $\gamma
_{2}^{\prime},\gamma_{2}^{\prime\prime}$ the segments on $\Theta_{2}$ of
tangent directions at $y$ opposite to those of $\gamma_{1}^{\prime}$,
$\gamma_{1}^{\prime\prime}$. Let $v^{\prime}$, $v^{\prime\prime}$ be the
intersection points of $c_{2}$ with $\gamma_{2}^{\prime},\gamma_{2}%
^{\prime\prime}$, and consider a variable point $v\in s_{2}$. Replace the
triangle $vv^{\prime}v^{\prime\prime}$ by a triangle of constant curvature
$1$. Then, on the new surface $A_{2}=A_{2}\left(  v\right)  $, there are two
shortest path on $\Theta_{2}$ from $y$ to $x$, of length $l_{2}\left(
v\right)  $. Note that $l_{2}\left(  c_{2}\cap s_{2}\right)  =\pi r$, and
$l_{2}\left(  y\right)  =(1+r)\pi/2$. Hence there exists $v_{0}\in s$ such
that $l_{2}\left(  v_{0}\right)  =l_{1}$.

On $A_{2}\left(  v_{0}\right)  $ we have four segments from $x$ to $y$ such
that no two of them are composing a loop, and the angle at $y$ between any two
consecutive segments is less than $\pi$. By virtue of the first variation
formula \cite[Theorem 3.5]{OS}, $y$ is a strict local maximum for $\rho
_{x}^{A_{2}\left(  v_{0}\right)  }$, and Lemma \ref{F_1} shows that
$F_{x}=M_{x}=\{y\}$.

Now, using Lemma \ref{LP}, one can define a metric $d_{2}$ on $S$ such that
$\left(  S,d_{2}\right)  =A_{2}\left(  v_{0}\right)  $, and such that $x\in
B\left(  z,\frac{1}{q}\right)  $.

Therefore, the Alexandrov metric $d_{2}$ does not belongs to $\mathcal{D}%
_{z,q}$, and a contradiction is obtained.
\end{proof}

\begin{lemma}
\label{T3}For most $A\in\mathcal{B}\left(  \kappa\right)  $, for most $x\in A
$ and any $y\in F_{x}$, there are at least three segments between $x$ and $y$.
\end{lemma}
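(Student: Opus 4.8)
The plan is to deduce Lemma \ref{T3} from Lemma \ref{Fix_A} by transferring the genericity statement from the space $\C_\kappa(S)$ of Alexandrov metrics on a fixed topological surface $S$ to the space $\mathcal{B}(\kappa)$ of Alexandrov surfaces. The two spaces are connected by the fact that every $A\in\mathcal{B}(\kappa)$ is, as an abstract metric space, isometric to some $(S,d)$ with $d\in\C_\kappa(S)$, where $S$ ranges over finitely many topological types (those realizable with the given curvature bound). The key structural obstruction is that Gromov--Hausdorff convergence in $\mathcal{B}(\kappa)$ does not directly correspond to $\delta$-convergence in $\C_\kappa(S)$; this is precisely where Perel'man's stability theorem (Lemma \ref{LP}) enters, as it was already used inside the proof of Lemma \ref{Fix_A}.

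First I would recall that Lemma \ref{Fix_A} already furnishes, for each admissible closed surface $S$, a residual set $\mathcal{G}(S)\subset\C_\kappa(S)$ of metrics for which most $x\in S$ are joined to each of their farthest points by at least three segments. The property ``most $x$ and any $y\in F_x$ have $\#\mathfrak{S}_{xy}\geq 3$'' is an intrinsic metric property, invariant under isometry, so it does not depend on the chosen identification of $A$ with $(S,d)$. The goal is therefore to show that the set of $A\in\mathcal{B}(\kappa)$ failing this property is meager. Since $\mathcal{B}(\kappa)$ has only finitely many connected components, each corresponding to a fixed topology $S$, it suffices to argue component by component.

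The central step is to produce a continuous (or at least category-preserving) correspondence between the Baire space $\C_\kappa(S)$, equipped with the uniform metric $\delta$, and the component of $\mathcal{B}(\kappa)$ consisting of surfaces homeomorphic to $S$, equipped with the Gromov--Hausdorff topology. The natural map $d\mapsto(S,d)$ is surjective onto that component and is a contraction for the respective metrics, since $d_{GH}\big((S,d),(S,d')\big)\leq\tfrac12\delta(d,d')$ via the identity correspondence. Conversely, Lemma \ref{LP} guarantees that any Gromov--Hausdorff convergent sequence of surfaces homeomorphic to $S$ can be realized, for large $n$, as metrics $d_n$ on $S$ with $\delta(d_n,d)\to 0$. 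This two-sided comparison lets one pull back a first-category failure set in $\mathcal{B}(\kappa)$ to a first-category set in $\C_\kappa(S)$, contradicting the residuality established in Lemma \ref{Fix_A}.

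The hard part will be handling the interaction between the two notions of ``most'' — ``most surfaces'' and, inside each surface, ``most points'' — and ensuring that the quantifier over points is preserved under the transfer. I would formalize the failure set exactly as in the proofs of Theorem \ref{TH2} and Lemma \ref{T2}: stratify ``at most two segments'' into the closed conditions appearing there (via $\varepsilon$-separation of directions and $\eta$-balls), reducing the claim to showing that certain closed sets $\mathcal{N}(\varepsilon,\eta)$-type families have empty interior in $\mathcal{B}(\kappa)$. Emptiness of the interior then follows from Lemma \ref{Fix_A} together with Lemma \ref{LP}: an interior point would, after pulling back to $\C_\kappa(S)$ by Perel'man stability, yield an open set of metrics all exhibiting long loops (equivalently, at most two segments on a ball of points), contradicting the density of $\mathcal{G}(S)$. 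The remaining routine verification is that these closed sets are genuinely closed under Gromov--Hausdorff limits, which follows verbatim from the limiting argument for $F_x$ in Lemma \ref{lim_Fx} and the segment-convergence argument in Lemma \ref{T2}.
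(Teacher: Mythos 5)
Your high-level plan coincides with the paper's actual proof: the paper establishes Lemma \ref{T3} by defining, for each $A\in\mathcal{B}(\kappa)$, the closed set $C(A)$ of points admitting a long loop, writing the exceptional family as $\mathcal{C}=\bigcup_{q}\mathcal{C}_{q}$ with $\mathcal{C}_{q}=\{A\in\mathcal{B}(\kappa)\,:\,\exists x\in A,\ \bar{B}(x,\tfrac1q)\subset C(A)\}$, and showing each $\mathcal{C}_{q}$ is closed with empty interior, the emptiness coming exactly from your transfer mechanism: by Lemmas \ref{Baire_A} and \ref{Fix_A} the good metrics are dense in the Baire space $\C_{\kappa}(S)$, and uniform closeness of metrics dominates Gromov--Hausdorff closeness. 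Note that for this direction of the transfer only the easy contraction $d_{GH}\bigl((S,d),(S,d')\bigr)\le\tfrac12\delta(d,d')$ is used; Perel'man stability (Lemma \ref{LP}) is not needed at this point (it is consumed earlier, inside the proofs of Lemmas \ref{y_conical} and \ref{Fix_A}), so invoking it to ``pull back'' the interior point is harmless but superfluous.

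The genuine defect is in your closedness step. You propose to stratify the failure condition ``at most two segments to some farthest point'' by the $\varepsilon$-separation conditions of Lemma \ref{T2} and to verify closedness ``verbatim'' from that proof. This cannot work: the $\varepsilon$-separation device makes \emph{lower} bounds on the number of segments closed (four pairwise $\varepsilon$-separated segments persist under limits, as in Lemma \ref{T2}), whereas an \emph{upper} bound on $\#\mathfrak{S}_{xy}$ is not stable under limits, since additional segments can appear at the limit surface; the set $\{x\,:\,\exists y\in F_{x},\ \#\mathfrak{S}_{xy}\le2\}$ is in general not closed, and no separation parameter repairs this. The correct closed envelope is precisely the object you mention only parenthetically: the long loop. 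Since $y\in F_{x}$ is a local maximum of $\rho_{x}$, if $\#\mathfrak{S}_{xy}\le2$ (and $y$ is not a conical point with $\Sigma_{y}$ of length at most $\pi$, a situation excluded on a typical surface, which carries no conical points), then two segments from $y$ to $x$ bound arcs of $\Sigma_{y}$ of length at most $\pi$, i.e.\ form a long loop; hence the failure set is contained in $C(A)$, and $C(A)$ and $\mathcal{C}_{q}$ are closed directly from this definition. Beware also that your ``equivalently'' between long loops and at most two segments is only one implication --- a long loop may coexist with a third segment --- but only the containment (few segments $\Rightarrow$ long loop) is needed, because nowhere density of $C(A)$ already yields the conclusion for most $x$.
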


\begin{proof}
For any surface $A\in\mathcal{B}\left(  \kappa\right)  $, the set
\[
L\left(  A\right)  \overset{\mathrm{def}}{=}\set(:x\in
A|\mathrm{there\;exists\;a\;long\;loop\;at\;}x:)\text{,}%
\]
is closed, from the definition of a long loop and the semi-continuity of angles.

We have
\[
\mathcal{L}=\set(:A\in\mathcal{B}\left(  \kappa\right)  |\mathrm{int}\left(
L(A) \right)  \neq\emptyset:)=\bigcup_{q\in\mathbb{N}^{\ast}}\mathcal{L}%
_{q}\text{,}%
\]
where
\[
\mathcal{L}_{q}\overset{\mathrm{def}}{=}\set(:A\in\mathcal{B}\left(
\kappa\right)  |\exists x\in A\;\bar{B}\left(  x,\frac{1}{q}\right)  \subset
L(A):).
\]
The sets $\mathcal{L}_{q}$ are clearly closed, and they have empty interior by
Lemma \ref{Fix_A}, hence $\mathcal{L}$ is of first category.
\end{proof}


\bigskip

\noindent\textbf{Acknowledgement.} The authors were partly supported by the
grant PN-II-ID-PCE-2011-3-0533 of the Romanian National Authority for
Scientific Research, CNCS-UEFISCDI.

They express thanks to Tudor Zamfirescu for suggesting them to investigate
properties of most Alexandrov surfaces.


{\small \bigskip}

{\small Jo\"el Rouyer }

{\small \noindent Institute of Mathematics ``Simion Stoilow'' of the Romanian
Academy, \newline P.O. Box 1-764, Bucharest 70700, ROMANIA \newline
Joel.Rouyer@ymail.com, Joel.Rouyer@imar.ro }

{\small \medskip}

{\small Costin V\^{\i}lcu }

{\small \noindent Institute of Mathematics ``Simion Stoilow'' of the Romanian
Academy, \newline P.O. Box 1-764, Bucharest 70700, ROMANIA \newline
Costin.Vilcu@imar.ro }

\end{document}